\documentclass[12pt]{amsart}
\usepackage{latexsym,enumerate}
\usepackage{amssymb, xcolor,mathrsfs}
\usepackage{amsmath,amsthm,amsfonts,amssymb,latexsym, mathabx}
\usepackage[pagebackref,colorlinks]{hyperref}
\usepackage{arydshln}

\newtheorem{theorem}{Theorem}[section]
\newtheorem{lemma}[theorem]{Lemma}
\newtheorem{proposition}[theorem]{Proposition}

\newtheorem{theorema}{Theorem}

\theoremstyle{definition}

\newcounter{starquote}

\newenvironment{starquote}{%
  \refstepcounter{starquote}%
  \begin{quote}
  $\mathbf{(\star)}$\quad
  \begin{minipage}{0.9\linewidth}
}{%
  \end{minipage}
  \end{quote}
}

\numberwithin{equation}{section}

\newcommand{\GL}{{\mathrm {GL}}}
\newcommand{\PGL}{{\mathrm {PGL}}}
\newcommand{\SL}{{\mathrm {SL}}}
\newcommand{\PSL}{{\mathrm {PSL}}}

\newcommand{\GU}{{\mathrm {GU}}}

\newcommand{\SU}{{\mathrm {SU}}}
\newcommand{\PSU}{{\mathrm {PSU}}}

\newcommand{\Irr}{{\mathrm {Irr}}}

\newcommand{\ord}{{\mathrm {ord}}}

\newcommand{\diag}{{\mathrm {diag}}}

\newcommand{\St}{{\mathsf {St}}}

\newcommand{\lcm}{{\mathrm {lcm}}}
\newcommand{\Gal}{{\rm Gal}}

\newcommand{\Res}{{\mathrm {Res}}}

\newcommand{\QQ}{{\mathbb Q}}
\newcommand{\ZZ}{{\mathbb Z}}

\newcommand{\FF}{{\mathbb F}}

\newcommand{\ta}{\hspace{0.5mm}^{2}\hspace*{-0.2mm}}

\makeatletter \@namedef{subjclassname@2020}{\textup{2020}
Mathematics Subject Classification} \makeatother

\begin{document}

\title[Character Values and Conductors of Low-Rank Groups of Lie Type]
{Character Values and Conductors of Low-Rank Groups of Lie Type}

\author[Christopher Herbig]{Christopher Herbig}
\address{Department of Mathematical Sciences,
Northern Illinois University, Dekalb, IL 60115, USA}
\email{cherbig@niu.edu}

\author[Nguyen N. Hung]{Nguyen N. Hung}
\address{Department of Mathematics, The University of Akron, Akron,
OH 44325, USA}
\email{hungnguyen@uakron.edu}

\subjclass[2020]{Primary 20C15, 11R18, 20C33, 11R20}

\keywords{Fields of values, character conductor, groups of Lie type,
Feit's conjecture}

\thanks{We thank Gabriel Navarro for several helpful comments, in particular for drawing our attention to
an example of an irreducible character
whose conductor is not one-value generated. The second author
gratefully acknowledges support from the AMS-Simons Research
Enhancement Grant (AWD-000167 AMS)}

\begin{abstract}
Let $\chi$ be a complex irreducible character of a finite group $G$.
The conductor of $\chi$, denoted $c(\chi)$, is the smallest positive
integer $n$ such that $\chi(x)\in \QQ(\exp({2\pi i/n}))$ for all
$x\in G$. We show that for certain rank $1$ finite groups of Lie
type, the conductor $c(\chi)$ is realized at a single group element;
that is, there exists $g\in G$ such that $c(\chi)=c(\chi(g))$. In
some quasisimple cases, we further prove that the field of values
\(\QQ(\chi)\) is generated by a single value. This phenomenon, which
is related to a well-known conjecture of W.~Feit, was recently
observed by Boltje \emph{et al.} in their reduction of the
conjecture to finite simple groups. Our approach uses techniques
from algebraic number theory together with the known character
tables of these groups.
\end{abstract}

\maketitle

\tableofcontents

%%%%%%%%%%%%%%%%%%%%%%%%
%%%%%%%%%%%%%%%%%%%%%%%

\section{Introduction}
This paper explores the conductor of character values for certain
finite groups of Lie type of (semisimple) rank $1$, namely the
$2$-dimensional linear groups $\GL_2(q)$ and $\SL_2(q)$, as well as
the Suzuki groups $\ta B_2(q)$.

Let \(G\) be a finite group and let \(\chi\in\Irr(G)\) be a complex
irreducible character of \(G\). For each \(x\in G\), the value
\(\chi(x)\) is a sum of roots of unity, i.e. a \emph{cyclotomic
integer}. Given a set \(S\) of cyclotomic integers, its conductor,
denoted \(c(S)\), is the smallest positive integer \(n\) such that
\(S\) is contained in the $n$-th cyclotomic field
\(\mathbb{Q}(\zeta_n)\). The \emph{conductor} of \(\chi\) is defined
by
\[
c(\chi):=c(\{\chi(x)\mid x\in G\})
=\operatorname{lcm}\bigl(\{c(\chi(x))\mid x\in G\}\bigr).
\]

%The second author~\cite{H} recently made the following observation,
%which remains unverified:
%\begin{starquote}\label{star}
%\emph{If $\chi\in\Irr(G)$, then there exists an element \(g \in G\)
%such that \(c(\chi) = c(\chi(g))\).}
%\end{starquote}
%An equivalent formulation is that there exists \(g \in G\) such that
%\(c(\chi(g))\) is divisible by \(c(\chi(x))\) for all \(x \in G\).
%This, if true, would strengthen a (still unsolved)

A well-known conjecture of W.~Feit \cite{Feit80} from 1980 asserts
that \emph{if $\chi\in \Irr(G)$, then $G$ contains an element whose
order is \(c(\chi)\)}; see \cite[\S3.3]{N} or \cite[Chapter~2]{Is18}
for modern textbook references. Some earlier results can be found in
\cite{Amit-Chillag86,Ferguson-Turull86,HTZ}. For background and
recent progress, we refer to the breakthrough work of R.~Boltje,
A.\,S.~Kleshchev, G.~Navarro, and P.\,H.~Tiep \cite{BKNT25}, where
the conjecture is reduced to a (much stronger) statement that needs
to be verified for finite simple groups.

Note that \(\chi(x)\) is a sum of \(\ord(x)\)-th roots of unity, so
\(c\bigl(\chi(x)\bigr)\mid \ord(x)\), and hence
\(c\bigl(\chi(x)\bigr)\) is equal to the order of a suitable power
of \(x\). This naturally leads to the question of whether the
following strengthening of Feit's conjecture holds: \emph{does there
exist \(g\in G\) such that $c(\chi)=c\bigl(\chi(g)\bigr)$?}
Equivalently, does there exist \(g\in G\) such that
\(c\bigl(\chi(g)\bigr)\) is divisible by \(c\bigl(\chi(x)\bigr)\)
for all \(x\in G\)? Although this fails in general, as noted in
\cite[\S9]{BKNT25}, counterexamples appear to be rare.

For finite simple groups, the character values of alternating and
sporadic groups are typically close to rational, so the statement is
easy to verify. In contrast, groups of Lie type have highly
irrational character values, making the problem substantially more
difficult. At present, no counterexamples are known for simple
groups of Lie type, or even for arbitrary finite groups of Lie type.
This leads to the problem of determining whether the following holds
true:
\begin{starquote}\label{star}
\emph{Let \(G\) be a finite group of Lie type. There exists \(g\in
G\) such that \(c(\chi)=c\bigl(\chi(g)\bigr)\)?}
\end{starquote}

The main aim of this paper is to provide initial evidence supporting
\eqref{star}. As usual, we denote by \(\Irr(G)\) the set of all
irreducible characters of \(G\).

\begin{theorema}\label{theorem:main}
Let $G$ be one of the groups $\GL_2(q)$ or $\SL_2(q)$, where $q$ is
a prime power; or $\ta B_2(q)$ where $q=2^{2n+1}$ with
$n\in\ZZ_{\ge0}$. Let $\chi\in\Irr(G)$. Then there exists an element
$g\in G$ such that
\[
c(\chi)=c(\chi(g)).
\]
\end{theorema}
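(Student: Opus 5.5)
The plan is a case-by-case verification using the known generic character tables of $\GL_2(q)$, $\SL_2(q)$ and $\ta B_2(q)$. The argument rests on one elementary number-theoretic tool. Let $\varepsilon_1,\dots,\varepsilon_k$ be roots of unity whose orders divide a fixed $m$, and consider sums of the form $\varepsilon_1+\varepsilon_1^{-1}$, $\varepsilon_1+\varepsilon_1^{q}$, or $\sum_{j}\varepsilon^{q^j}$. For these we need to determine the conductor exactly. The relevant facts are:
\begin{itemize}
\item a root of unity $\zeta$ of order $n$ has conductor $n$ or $n/2$, the latter occurring when $n\equiv 2\pmod 4$;
\item $\zeta+\zeta^{-1}$ has the same conductor as $\zeta$ when the order exceeds $2$ (up to the same $2$-adjustment);
\item $\zeta+\zeta^{q}$, where $\zeta$ has order dividing $q^2-1$ but not $q-1$, generates the fixed field of the subgroup $\{1,\sigma_q\}$ of $\Gal(\QQ(\zeta)/\QQ)$.
\end{itemize}
I would compute the conductor of such a fixed field by the standard criterion. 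The conductor of a subfield $K\subseteq\QQ(\zeta_n)$ is the least $d$ such that $\Gal(\QQ(\zeta_n)/K)$ contains the kernel of the reduction map $(\ZZ/n)^\times\to(\ZZ/d)^\times$.

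For $\GL_2(q)$ the characters fall into four families. The linear characters $\alpha\circ\det$ and the Steinberg twists $\alpha\St$ take only values $\alpha(a)$ and multiples of them, so $g$ is taken to be a scalar or a diagonal element whose determinant generates the image of $\alpha$.

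The principal series characters $\chi_{\alpha,\beta}$ take values $(q+1)\alpha(a)\beta(a)$ on scalars and $\alpha(a)\beta(b)+\alpha(b)\beta(a)$ on split semisimple classes. Here the main point is to show that one suitable choice of $(a,b)$ already has conductor $\lcm$ of everything. The natural choice is $b=1$ with $a$ a generator of $\FF_q^\times$, which gives the value $\alpha(a)+\beta(a)$. I expect to argue that $\QQ(\alpha(a)+\beta(a))$ has conductor $\lcm(\ord\alpha,\ord\beta)$ up to $2$-parts, because the only Galois automorphisms fixing $\xi+\eta$ (with $\xi,\eta$ roots of unity) are those fixing both or swapping them. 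Where swapping is possible the resulting field still has full conductor, except in small exceptional cases. These exceptional cases, such as $\xi=-\eta$, would be handled by choosing another $(a,b)$ or a scalar element.

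The cuspidal characters attached to $\theta\in\Irr(\FF_{q^2}^\times)$ with $\theta\ne\theta^q$ take the value $-(\theta(z)+\theta(z)^q)$ on elliptic elements and $(q-1)\theta(c)$ on scalars. The field generated by all values is generated by $\theta|_{\FF_q^\times}$ and $\theta(z)+\theta(z^q)$, and I would take $g=z$ to be a generator of the nonsplit torus. One then checks that $\QQ(\theta(z)+\theta(z)^q)$ contains $\theta(z)^{q+1}=\theta(\det z)$, up to the usual conductor-$2$ ambiguity. This containment uses the minimal-polynomial relation: $\theta(z)$ and $\theta(z)^q$ are the two roots of $X^2-sX+\theta(z)^{q+1}$, where $s=\theta(z)+\theta(z)^q$.

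For $\SL_2(q)$ I would use Clifford theory from $\GL_2(q)$. Most irreducibles are restrictions, so their values are the ones above restricted to determinant-one elements. The remaining characters are the four characters of degree $(q\pm1)/2$ for $q$ odd, whose values involve $(\pm1\pm\sqrt{\epsilon q})/2$ with $\epsilon=(-1)^{(q-1)/2}$. These are handled directly: the conductor of $\sqrt{\epsilon q}$ is $p$ (times $4$ if $q$ is an even power of $p$ this is rational), and the unipotent classes realize it. For these characters the extra torus values are $\pm(\xi+\xi^{-1})$ with $\xi$ of order $2$ or small, so they contribute nothing more. For the restricted principal series and cuspidal characters, the values on tori are $\alpha(a)+\alpha(a)^{-1}$ and $\theta(z)+\theta(z)^{-1}$, whose conductors are exactly those of $\alpha(a)$ and $\theta(z)$, so a torus generator suffices.

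For $\ta B_2(q)$ I would use Suzuki's character table. The irrational values are:
\begin{itemize}
\item $\pm r\sqrt{-1}/2$-type values on elements of order $4$, with conductor $4$;
\item sums $\epsilon+\epsilon^{-1}$ over the split torus of order $q-1$;
\item sums of four conjugates $\sum\varepsilon^{\pm1,\pm q}$ over the tori of orders $q\pm r+1$, where $r=\sqrt{2q}$.
\end{itemize}
Each family of characters is nonzero and irrational on at most one type of torus plus possibly order-$4$ elements. The point is then to show that the lcm of the conductor of a torus value and $4$ is realized. Here I would use the fact that the character value at a product $g=tu$ is not available, since there are no elements of order $4(q\pm r+1)$. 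Instead one checks that for those families the values on order-$4$ elements are rational. The genuine case is the four characters of degree $r(q-1)/2$, which are rational on tori, so $g$ is taken of order $4$.

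The main obstacle, I expect, is the principal-series computation for $\GL_2(q)$ and the four-term torus sums for Suzuki groups. There one must prove that a specific sum of roots of unity generates a field of maximal conductor, that is, that no element of the kernel of reduction mod a proper divisor fixes it. I would attack this with the known classification of vanishing/linear relations among few roots of unity (sums of at most four roots of unity), reducing to showing the stabilizer in $(\ZZ/n)^\times$ is exactly the expected subgroup $\langle -1,q\rangle$. Its conductor is then computed directly using $\gcd(q\pm1,\cdot)$ considerations.
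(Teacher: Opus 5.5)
\textbf{There is a genuine gap: the degenerate sums.} The proposal never treats the configurations in which the chosen sum of roots of unity vanishes or collapses to fewer roots of unity. For the cuspidal characters of $\GL_2(q)$ this makes your explicit choice $g=z$ wrong.

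Take $q=5$ and $\theta$ of order $8$ on $\FF_{25}^\times$. Then $x:=\theta(z)=\zeta_8$ and $x^q=-\zeta_8$, and $\theta^q\neq\theta$. So $\chi(z)=-(x+x^q)=0$. On the scalar $z^6\in\FF_5^\times$, however, $\chi$ takes the value $4\theta(z^6)=-4i$. Hence $c(\chi)=4\neq 1=c(\chi(z))$.

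The containment $\theta(z)^{q+1}\in\QQ(\theta(z)+\theta(z)^q)$ fails here. The quadratic $X^2-sX+\theta(z)^{q+1}$ could not have proved it anyway: it only says $x$ is at most quadratic over $\QQ(s,x^{q+1})$. The correct argument is Galois-theoretic and needs $s\neq 0$:
\begin{itemize}
\item If $\tau\in\Gal(\QQ(x)/\QQ)$ fixes $s=x+x^q$, then $\tau x+\tau x^q-x-x^q=0$ is a vanishing sum of four roots of unity. It must split into two vanishing pairs, and the pairing $\{\tau x,\tau x^q\}$ would force $s=0$.
\item So $\tau$ fixes or swaps $x$ and $x^q$, hence fixes $x^{q+1}$. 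Then every value (the power sums $x^k+x^{kq}$ and the powers of $x^{q+1}$) lies in $\QQ(s)$.
\item When $s=0$ you must pass to $g=z^2$. Then $q$ is odd, the elliptic values are $-x^k(1+(-1)^k)$, and $x^{q+1}=(x^2)^{(q+1)/2}$. So everything lies in $\QQ(x^2)=\QQ(\chi(z^2))$. This is exactly the case split in the paper's treatment of $Y_{(n)}$.
\end{itemize}
For the same reason your third bulleted ``fact'' is false as stated: the fixed field of $\sigma_5$ on $\QQ(\zeta_8)$ is $\QQ(i)$, yet $\zeta_8+\zeta_8^5=0$. Your second bullet fails for orders $3,4,6$, though that is harmless since the values are then rational. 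Your principal-series fallback does work by the same pairing argument. If $\alpha(\gamma)+\beta(\gamma)\neq 0$, all values are symmetric in $\alpha(\gamma),\beta(\gamma)$ and lie in $\QQ(\alpha(\gamma)+\beta(\gamma))$. Otherwise the scalar $\gamma I$ realizes the whole field $\QQ(\alpha(\gamma)^2)$.

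\textbf{For $\ta B_2(q)$ the decisive step is only announced.} Write $r=\sqrt{2q}$. Your target, that $\alpha_m=\zeta^m+\zeta^{mq}+\zeta^{-m}+\zeta^{-mq}$ has ``maximal conductor'', is false in degenerate cases. Since $5\mid q^2+1=(q+r+1)(q-r+1)$, some $\zeta^m$ has order $5$. Then $\alpha_m=-1$ has conductor $1<5$. The character is rational there, so the theorem survives, but your argument has to detect this.

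In the nondegenerate case your target is precisely Loxton's theorem: $\QQ_{c(\alpha_m)}=\QQ(\zeta^m)$ when $\alpha_m$ is not a sum of fewer than four roots of unity. That alone gives $c(\chi)=c(\alpha_m)$, since all values lie in $\QQ(\zeta^m)$. Reproving it via a stabilizer computation means controlling the eight-term vanishing sum $\sum_i\tau(\xi_i)-\sum_i\xi_i=0$, not relations among four roots of unity, and your sketch does not do this.

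The paper cites Loxton for this step. It adds a bound $[\QQ_{c(\beta)}:\QQ(\beta)]\le 6$ for four-term sums to obtain the stronger field equality. It disposes of the degenerate cases by running through the possible decompositions of $\alpha_m+\xi_1+\dots+\xi_k=0$ ($k\le 3$) into minimal vanishing sums of at most seven terms. That analysis uses that the four terms have equal odd order and are closed under inversion.

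Minor: there are two, not four, characters of degree $r(q-1)/2$.
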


One may ask whether the equality \(c(\chi)=c(\chi(g))\) can be
strengthened to \(\QQ(\chi)=\QQ(\chi(g))\), where
\(\QQ(\chi):=\QQ(\{\chi(x):x\in G\})\) denotes the field of values
of \(\chi\). We show that this is indeed the case for the
quasisimple groups \(\SL_2(q)\) and \(\ta B_2(q)\). We note that the
phenomenon of generating character fields from a single value for
finite simple groups was already observed in \cite[\S9]{BKNT25}.

We emphasize that what we actually prove in
Theorem~\ref{theorem:main} is that, for the groups under
consideration, the conductor of the value set of every irreducible
character is attained at a single value. Although the character
tables of these groups are known, this result is far from immediate.
Determining the conductor of a character's value set is generally a
nontrivial task, and establishing that it can be realized by a
single value is even more delicate. Our approach involves a careful
analysis of the conductors of irrational values, and much of the
paper therefore draws on algebraic number theory.

The proof of Theorem~\ref{theorem:main} is is given in
Propositions~\ref{prop:GL2q}, \ref{prop:main-SL2q}, and
\ref{prop:main-Suzuki}.

%%%%%%%%%%%%%%%%%%%%%%
%%%%%%%%%%%%%%%%%%%%%%%

\section{Two-dimensional general linear groups
$\GL_2(q)$}\label{sec:GL2q}

In this section we prove Theorem~\ref{theorem:main} for the family
of the two-dimensional general linear groups $\GL_2(q)$, where \(q\)
is a prime power.

For a character \(\chi\) of  a finite group \(G\), we write
\[
\mathcal{VS}(\chi) := \{\chi(x) : x \in G\},
\]
the set of all values of $\chi$. If \(\chi\) is linear, then
\(\mathcal{VS}(\chi) = \{\xi^i : i \in \ZZ_{\ge 0}\}\) for some root
of unity \(\xi\), and hence \(\chi\) trivially satisfies
(\ref{star}). When \(\chi(1) > 1\), a classical result of Burnside
implies that \(\chi\) vanishes at some group element (see
\cite[Theorem~3.15]{Is}), so \(0 \in \mathcal{VS}(\chi)\).

For simplicity, we often omit the value \(0\) from
\(\mathcal{VS}(\chi)\). More generally, for the purpose of verifying
(\ref{star}), one may disregard rational values, as well as any
value that is a rational multiple of another. Accordingly, we write
\[
\overline{\mathcal{VS}}(\chi)\subseteq \mathcal{VS}(\chi)
\]
for a \emph{reduced value set}, meaning any subset obtained by
removing such values. The statement (\ref{star}) can then be
restated as:
\begin{quote}
\emph{There exists a value $z\in \overline{\mathcal{VS}}(\chi)$ such
that \(c\left(\overline{\mathcal{VS}}(\chi)\right)=c(z)\).}
\end{quote}

%%%%%%%%%%%%%%%%%%%%%%%%%%

\subsection{Character values of $\GL_2(q)$}\label{subsec:GL2q}

The irreducible characters of \(G:=\GL_2(q)\), as well as
$\SL_2(q)$, were determined by H.~Jordan and I.~Schur in the early
20th century. We describe them here using modern language, in the
spirit of Deligne--Lusztig theory, and partially follow the notation
in \cite[\S2]{GM20}.

The group $G=\GL_2(q)$ has two conjugacy classes of maximal tori.
One consists of the maximally split tori isomorphic to $\FF_q^\times
\times \FF_q^\times$, with a representative, say $T_1$, consisting
of diagonal matrices $\{\diag(x,y): x,y\in \FF_q^\times\}$. The
other consists of non-split maximal tori of order \(q^2 - 1\); a
representative \(T_2\) is obtained, for instance, by embedding
\(\FF_{q^2}^\times\) via its linear action (by multiplication) on
the \(\FF_q\)-vector space \(\FF_{q^2}\).

The irreducible characters of \(G\) fall into four families. The
first two consists of \emph{unipotent characters}. There are \(q-1\)
linear characters, each of the form \(\theta \circ \det\) with
\(\theta \in \Irr(\FF_q^\times)\). There are also \(q-1\) characters
of degree \(q\), namely the \emph{Steinberg characters}, of the form
\(\St \otimes \theta\), where \(\St + \mathbf{1}_G\) is the
rational-valued permutation character afforded by the action of
\(G\) on the \(q+1\) one-dimensional subspaces of the natural
module. Therefore the value set of \(\St \otimes \theta\) is the
same as \(\theta\), which is of the form \(\{\xi^i : i \in \ZZ_{\ge
0}\}\), as mentioned above, for some root of unity $\xi$. In
particular, the desired conclusion holds in this case. It remains to
consider the non-unipotent characters, which turn out to be all
\emph{semisimple characters}.

The next family consists of \((q-1)(q-2)/2\) characters of degree
\(q+1\), labeled by \[X_{(m,n)}, \text{ where } 0\le m<n\le q-2.\]
These are the Deligne--Lusztig characters
\(R_{T_1}^G(\theta_1\otimes\theta_2)\), one for each unordered pair
\(\{\theta_1,\theta_2\}\) of distinct irreducible characters of
\(\FF_q^\times\). Let \(\varepsilon\) be a primitive \((q-1)\)-th
root of unity. A reduced value set of \(X_{(m,n)}\) is
\begin{equation}\label{E:X-mn}
\overline{\mathcal{VS}}(X_{(m,n)})= \left\{ \varepsilon^{(m+n)a},
\varepsilon^{nc + md} + \varepsilon^{nd + mc}:
\begin{array}{l}
0 \le a \le q-2,\\
0 \le c < d \le q-2
\end{array}
\right\}.
\end{equation}

The final family consists of \((q^2-q)/2\) characters of degree
\(q-1\). Let $E_q \subseteq \{0,1,\dots,q^2-2\}$ be a set of
representatives for the equivalence relation on $\mathbb{Z}$ defined
by $a \sim a'$ if $a' \equiv a \pmod{q^2-1}$ or $a' \equiv q a
\pmod{q^2-1}$. These characters are labeled by \[Y_{(n)}, \text{
where } n\in E_q \text{ and } (q+1)\nmid n.\] They are the
Deligne--Lusztig characters \(R_{T_2}^G(\gamma)\) with
\(\gamma\in\Irr(T_2)\), noting that
\(R_{T_2}^G(\gamma)=R_{T_2}^G(\gamma^q)\). Let \(\eta\) be a
primitive \((q^2-1)\)-th root of unity. A reduced value set of
\(Y_{(n)}\) is
\begin{equation}\label{E:Y-n}
\overline{\mathcal{VS}}(Y_{(n)})= \left\{
\eta^{na(q+1)},-(\eta^{ne}+\eta^{neq}):
\begin{array}{l}
0 \le a \le q-2,\\
e\in E_q, (q+1)\nmid e
\end{array}
\right\}.
\end{equation}

%%%%%%%%%%%%%%%%%%%%%%%%%%%%%%%%%%%%%%

\subsection{The family $X_{(m,n)}$}

We require some number-theoretic lemmas. The first of these is a
result of J.\,H.~Loxton that allows us to determine conductors of
cyclotomic integers in many scenarios.

\begin{lemma}\label{L:Lox}
Let $\alpha$ be a cyclotomic integer, and write $\alpha =
\sum_{i=1}^k \xi_i$ for roots of unity $\xi_i$. Further, assume that
$\alpha$ cannot be expressed as a sum of less than $k$  roots of
unity. Then, $\xi_i \in \QQ_{c(\alpha)}$ for each $i$. In
particular, $\QQ_{c(\alpha)} = \QQ(\xi_1,\ldots,\xi_k)$.
\end{lemma}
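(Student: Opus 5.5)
The plan is to argue by contradiction, removing one prime at a time from the common order of the $\xi_i$. Throughout I use the convention $\QQ_n=\QQ(\zeta_n)$, so that $\QQ_n=\QQ_{2n}$ for odd $n$. In particular $-1\in\QQ_M$ for every $M$, and $-\xi$ is again a root of unity whenever $\xi$ is.

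Let $n=c(\alpha)$ and let $N$ be the least common multiple of the orders of $\xi_1,\dots,\xi_k$. Among all minimal-length expressions of $\alpha$, choose one for which $N$ is as small as possible (up to the convention above). Suppose, for contradiction, that some $\xi_i\notin\QQ_n$. Then there is a prime $p$ with $p^a\parallel N$, $a\ge1$, whose exponent in $n$ is smaller than $a$. Replacing $N$ by $N/2$ when $p=2$ and $a=1$ is harmless, because a root of order $2M$ with $M$ odd lies in $\QQ_M$. So we may assume $(p,a)\neq(2,1)$. Put $M=N/p$, so that $n\mid M$ (or $n\mid 2M$ with $\QQ_n=\QQ_M$), and hence $\alpha\in\QQ_M$. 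Let $\zeta$ be a primitive $p^a$-th root of unity. Every $\xi_i$ can be written as $\zeta^{j}\eta$ with $0\le j\le p-1$ and $\eta\in\QQ_M$ a root of unity. Grouping terms gives
\[
\alpha=\sum_{j=0}^{p-1}\beta_j\zeta^j ,
\]
where $\beta_j$ is a sum of $k_j$ roots of unity in $\QQ_M$ and $\sum_j k_j=k$.

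\textbf{Case $a\ge2$.} Here $1,\zeta,\dots,\zeta^{p-1}$ is a basis of $\QQ_N$ over $\QQ_M$. Since $\alpha\in\QQ_M$, comparing coordinates gives $\beta_j=0$ for $j\ge1$, and so $\alpha=\beta_0$ is a sum of $k_0$ roots in $\QQ_M$. If some $k_j>0$ with $j\ge1$, this expression is shorter than $k$, contradicting minimality of $k$. Otherwise every $\xi_i$ already lies in $\QQ_M$, contradicting the choice of $N$.

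\textbf{Case $a=1$, $p$ odd.} Now $\zeta,\dots,\zeta^{p-1}$ is a basis of $\QQ_N$ over $\QQ_M$. Using $1=-\sum_{j\ge1}\zeta^j$, both $\alpha$ and the $\beta_0$ term can be rewritten in this basis. Comparing coordinates yields $\beta_j-\beta_0=-\alpha$ for every $j\ge1$. Hence all $\beta_j$ with $j\ge1$ are equal, and $\alpha=\beta_0-\beta_j$ for each $j\ge1$. Choose $j\ge1$ with $k_j$ minimal. Then $\alpha=\beta_0+(-\beta_j)$ is a sum of $k_0+k_j\le k$ roots of unity in $\QQ_M$. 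Equality forces $k_i=0$ for the other indices $i\ge1$, which exist because $p\ge3$. This gives $\beta_i=0$, hence $\beta_j=0$ as well, so every $\xi_i$ lies in $\QQ_M$. In every subcase we contradict either the minimality of $k$ or the minimality of $N$.

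This establishes $\xi_i\in\QQ_{c(\alpha)}$ for all $i$. The final assertion $\QQ_{c(\alpha)}=\QQ(\xi_1,\dots,\xi_k)$ follows: the inclusion $\supseteq$ is what we just proved, and $\subseteq$ holds because $\alpha\in\QQ(\xi_1,\dots,\xi_k)$, which is a cyclotomic field $\QQ_m$. Then $c(\alpha)\mid m$, up to the factor-$2$ convention, so $\QQ_{c(\alpha)}\subseteq\QQ_m$.

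The main obstacle is the case $a=1$. There the powers of $\zeta$ are linearly dependent over $\QQ_M$, so one must exploit the relation $1+\zeta+\dots+\zeta^{p-1}=0$ and carefully count terms to produce a genuinely shorter expression. The bookkeeping for $p=2$ via the $\QQ_n=\QQ_{2n}$ convention also needs care.
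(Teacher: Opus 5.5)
Your proof is correct, and it takes a genuinely different route from the paper. The paper gives no argument of its own and simply quotes Theorem~1 of Loxton's paper. You instead reprove the statement directly by a prime-by-prime descent. For a prime $p$ with $p^a$ exactly dividing $N$ but a smaller $p$-part in $c(\alpha)$, you split the $N$-th roots of unity into the cosets $\zeta^j\mu_M$, where $\mu_M$ is the group of $M$-th roots of unity. You then compare coordinates in the $\QQ_M$-bases $1,\zeta,\dots,\zeta^{p-1}$ (when $a\ge 2$) and $\zeta,\dots,\zeta^{p-1}$ (when $a=1$). The case $a=1$ is the delicate one, because the relation $1+\zeta+\dots+\zeta^{p-1}=0$ is what lets a non-minimal expression such as $\zeta_3+\zeta_3^2=-1$ leave $\QQ_{c(\alpha)}$. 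Your count $k_0+\min_{j\ge1}k_j\ge k$ shows exactly how minimality of $k$ rules this out. The citation buys brevity. Your version is self-contained: it needs only the degrees $[\QQ_{pM}:\QQ_M]$ and the standard fact $\QQ_a\cap\QQ_b=\QQ_{\gcd(a,b)}$. You use that fact implicitly, both to get $c(\alpha)\mid N$ (hence $c(\alpha)\mid M$) and for the final inclusion. Your argument also makes clear why the minimality hypothesis cannot be dropped.

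A few points to tidy. First, do not select, among all minimal-length expressions, one with $N$ smallest. The lemma is about the given expression, and you never actually use that minimality. In the branch where every $\xi_i$ lies in $\QQ_M$, you get $N\mid\lcm(2,M)$, whose $p$-adic valuation is $a-1<a$ once $(p,a)\neq(2,1)$. That contradicts the definition of $N$ itself. Second, the exclusion of $(p,a)=(2,1)$ is cleanest if you pick $p$ from an offending $\xi_i$ with $v_p(\ord\xi_i)>v_p(\lcm(2,c(\alpha)))$, where $v_p$ is the $p$-adic valuation. This forces $a\ge 2$ when $p=2$ and still gives $v_p(c(\alpha))<a$. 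Third, in the case $a=1$, the equality $\beta_j=0$ does not by itself give $k_j=0$. That follows from your choice of $j$ with $k_j$ minimal, since $k_j\le k_i=0$.
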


\begin{proof}
This follows from \cite[Theorem~1]{L75}.
\end{proof}

%\begin{proof}
%It is clear that $\QQ_{c(\alpha)} \subseteq \QQ(\epsilon_1,
%\epsilon_2)$. A result of Loxton \cite[Theorem 1]{L75} states that
%if $\beta = \sum_{i=1}^k \eta_i$ is a sum of $k$ roots of unity
%$\eta_i$, where $\beta$ cannot be expressed as a sum of fewer than
%$k$ roots of unity, then each $\eta_i$ lies in $\QQ_{c(\beta)}$.
%Thus, our assumptions now imply that $\QQ(\epsilon_1,\epsilon_2)
%\subseteq \QQ_{c(\alpha)}$. The statement now follows.
%\end{proof}

Our analysis of the characters $X_{(m,n)}$ will require us to
consider cyclotomic integers taking the form $\varepsilon^{nc + md}
+ \epsilon^{nd + mc}$ where $\varepsilon$ is a $(q-1)$-th root of
unity and $c$ and $d$ are chosen so that $nc + md = (n,m)$. Special
care will need to be taken when $\epsilon^{nc + md} + \epsilon^{nd +
mc}$ is zero, which is equivalent to having the following
congruence:
$$
nc + md \equiv nd + mc + \frac{q-1}{2} \pmod{q-1}.
$$

The next lemma shall help us handle this situation.

\begin{lemma}\label{L:2:2}
Let $n,$ $m$, and $k$ be fixed integers where $k$ is even. Further
assume that whenever $c$ and $d$ are integers such that $nc + md =
(n,m)$, then the following congruence holds:
    \begin{equation}\label{L:fund-cong}
        nc + md \equiv nd + mc + \frac{k}{2} \pmod{k}.
    \end{equation}
Then, $(n^2 - m^2)/(m,n)$ is divisible by $k$.
\end{lemma}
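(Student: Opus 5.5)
Write $g=(n,m)$ and $n=gn'$, $m=gm'$ with $(n',m')=1$. The plan is to use the freedom in choosing the B\'ezout coefficients $c,d$. Fix one solution $(c_0,d_0)$ of $nc+md=g$. Every solution then has the form
\[
(c,d)=(c_0+tm',\, d_0-tn'), \qquad t\in\ZZ .
\]
Set $D(c,d):=(nc+md)-(nd+mc)=(n-m)(c-d)$. The hypothesis says that $D(c,d)\equiv k/2 \pmod k$ for every such solution.

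First, apply the hypothesis at $t$ and at $t+1$ and subtract. The difference of the two values of $D$ is
\[
(n-m)\bigl(m'+n'\bigr)=\frac{(n-m)(n+m)}{g}=\frac{n^2-m^2}{g}.
\]
Both values of $D$ are congruent to $k/2$ modulo $k$, so their difference is congruent to $0$ modulo $k$. Hence $k \mid (n^2-m^2)/g$, which is the claim.

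The only check needed is that the general solution really is parametrized by $(c_0+tm', d_0-tn')$. This is standard: from $n'(c-c_0)=-m'(d-d_0)$ and $(n',m')=1$ we get $m' \mid c-c_0$. No serious obstacle is expected. The one degenerate case is $n=m=0$, where $(n,m)=0$; I would exclude it, since $(n^2-m^2)/(m,n)$ is then undefined. In the paper's setting $m<n$, so this case does not arise. The evenness of $k$ is used only to make $k/2$ an integer.
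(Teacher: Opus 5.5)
Your proof is correct and is essentially the paper's argument: both compare the hypothesis at a B\'ezout pair $(c,d)$ and at the shifted pair $\bigl(c+m/(n,m),\, d-n/(n,m)\bigr)$, whose difference is exactly $(n^2-m^2)/(n,m)$, which therefore vanishes modulo $k$. Only the fact that the shifted pair is again a solution is used, so checking the full parametrization of all solutions is not needed.
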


\begin{proof}
Given any solution $(x,y)$ to the linear Diophantine equation $nx +
my = (n,m)$, any other solution $(x',y')$ is of the form $x' = x +
tm/(n,m)$ and $y' = y - tn/(n,m)$ for some integer $t$. Thus, taking
$t = 1$, our assumptions imply that the following congruence holds:
    $$
n\bigg(c + \frac{m}{(n,m)}\bigg) + m\bigg(d - \frac{n}{(n,m)}\bigg)
\equiv n\bigg(d - \frac{n}{(n,m)}\bigg) + m\bigg(c +
\frac{m}{(n,m)}\bigg) + \frac{k}{2} \pmod{k}.
    $$
Simplifying and canceling with the congruence in
\eqref{L:fund-cong}, we are left with
    $$
        0 \equiv \dfrac{n^2 - m^2}{(m,n)} \pmod{k},
    $$
    as desired.
\end{proof}

The next lemma allows us to handle cases where $\varepsilon^{nc
+ md} + \varepsilon^{nd + mc}$ equals a single root of unity. We
take advantage of the following fact (which may be reasoned
geometrically): For any vanishing sum of three roots of unity, say
$\xi_0+ \xi_1+ \xi_2 = 0$, there is a fixed root of unity $\xi$
such that $\xi_j = \xi\zeta_3^{\pm j}$ for each $j$, where
$\zeta_3$ is a primitive third root of unity. In our case, it is
possible to factor out a $(q-1)$-th root of unity $\xi$ such that
$\varepsilon^{nc + md} = \xi\zeta_3^j$ and $\varepsilon^{nd + mc} =
\xi\zeta_3^{2j}$ for $j \in \{1,2\}$. This yields that
$$
    \epsilon^{nc + md} + \epsilon^{nd + mc} = \xi(\zeta_3^j + \zeta_3^{2j}) = -\xi.
$$
This implies the following congruence holds:
$$
    nc + md  \pm \dfrac{q-1}{3} \equiv nd + mc \mp  \dfrac{q-1}{3} \pmod{q-1},
$$
which may be rewritten as
$$
    nc + md  \equiv nd + mc + \sigma_{c,d}\cdot \dfrac{q-1}{3} \pmod{q-1},
$$
where $\sigma_{c,d}$ is a sign depending upon a choice of integers
$c$ and $d$ such that $nc + md = (n,m)$. Ideally, we would like to
have a result analogous to Lemma~\ref{L:2:2}, but the fact that
$\sigma_{c,d}$ depends upon a choice of $c$ and $d$ poses an
obstruction. However, we show in our next lemma that under our
circumstances, the sign is actually independent of our choice of $c$
and $d$.

\begin{lemma}\label{L:2:4}
Let $n$, $m$, and $k$ be fixed integers where $3 \mid k$. Assume for
any integers $c$ and $d$ such that $nc + md = (n,m)$, there exists a
sign $\sigma_{c,d} \in \{-1,1\}$ such that
    $$
    nc + md \equiv nd + mc + \sigma_{c,d}\cdot\frac{k}{3} \pmod{k}.
    $$
Then, in fact, there exists a fixed sign $\sigma \in \{-1,1\}$ such
that
    $$
    nc + md \equiv nd + mc + \sigma\cdot \frac{k}{3} \pmod{k}
    $$
for all integers $c$ and $d$ with $nc + md = (n,m)$. In particular,
$(n^2 - m^2)/(n,m)$ is divisible by $k$.
\end{lemma}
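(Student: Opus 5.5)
The plan is to follow the approach of Lemma~\ref{L:2:2}: parametrize all solutions of the Diophantine equation, and track how the quantity $(nc+md)-(nd+mc)$ changes along that parametrization. First observe that $(nc+md)-(nd+mc)=(n-m)(c-d)$, so the hypothesis says $(n-m)(c-d)\equiv \sigma_{c,d}\,k/3 \pmod k$ for every solution $(c,d)$ of $nc+md=(n,m)$. We may assume $k\neq 0$, so that $k/3\not\equiv 0$ and $2k/3\not\equiv 0 \pmod k$; the case $k=0$ is degenerate and would be excluded or handled trivially.

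Fix one solution $(c,d)$. As in the proof of Lemma~\ref{L:2:2}, every solution has the form $(c_t,d_t)=\bigl(c+tm/(n,m),\ d-tn/(n,m)\bigr)$ with $t\in\ZZ$. Set $D:=(n^2-m^2)/(n,m)$ and $f_t:=(n-m)(c_t-d_t)$. A direct computation gives
\[
f_t=(n-m)(c-d)+t(n-m)\frac{m+n}{(n,m)}=f_0+tD .
\]
So the values $f_t \bmod k$ form an arithmetic progression with common difference $D$, and by hypothesis every term lies in $\{k/3,-k/3\}\bmod k$.

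The key step, and the only real content, is a three-term argument using $t=0,1,2$. Suppose $f_0\not\equiv f_1$. Then $\{f_0,f_1\}\equiv\{k/3,-k/3\}$, so $D\equiv f_1-f_0\equiv \pm 2k/3\equiv \mp k/3 \pmod k$. Concretely, if $f_0\equiv k/3$ and $f_1\equiv -k/3$, then $D\equiv k/3$. In that case $f_2=f_1+D\equiv 0\pmod k$, which lies in neither class and contradicts the hypothesis. The other sign pattern is symmetric.

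Hence $f_0\equiv f_1$, which gives $D\equiv 0\pmod k$. Then $f_t\equiv f_0$ for all $t\in\ZZ$, including negative $t$. So the sign $\sigma:=\sigma_{c,d}$ of the fixed solution works for every solution, and simultaneously $k\mid (n^2-m^2)/(n,m)$. The only point requiring care is remembering to use three consecutive solutions rather than two: two solutions alone only show $D\equiv 0$ or $D\equiv \pm k/3$, and the third term is what rules out the alternation.
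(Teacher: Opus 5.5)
Your proposal is correct and follows essentially the same route as the paper. Both parametrize all solutions as $(c+tm/(n,m),\,d-tn/(n,m))$, observe that the difference $(nc+md)-(nd+mc)$ shifts by $t(n^2-m^2)/(n,m)$, and rule out a step of $\pm k/3$ because the progression would then reach a third residue class. You make that last point explicit with $f_2\equiv 0$, while the paper phrases it as a count: as $t$ varies, the shifted side takes three residues modulo $k$, but the $\pm k/3$ side allows at most two.
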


\begin{proof}
Fix an integer solution $(c,d)$ to the linear Diophantine equation
$nc + md = (n,m)$. Define integer functions $c(t)$ and $d(t)$ by
setting $c(t) = c + tm/(n,m)$ and $d(t) = d - tn/(n,m)$. We have
that
    $$
        -\sigma_{c(t),d(t)}\frac{k}{3} + nc(t)+ md(t) \equiv nd(t) + mc(t) \pmod{k},
    $$
    which simplifies to
    \begin{equation}\label{E:plac1}
        -\sigma_{c(t),d(t)}\frac{k}{3} + nc + md \equiv nd + mc + \dfrac{t(m^2 - n^2)}{(n,m)} \pmod{k}.
    \end{equation}
Take $t = 1$. Notice that since $nc + md - nd - mc \equiv
\sigma_{c,d}\cdot\frac{k}{3} \pmod{k}$, it follows that
    $$
    \dfrac{(m^2 - n^2)}{(n,m)} \equiv s\cdot\frac{k}{3} \pmod{k},
    $$
for some $s \in \{0,1,2\}$. However, if $3 \nmid s$, then among all
values of $t \in \ZZ$, the right hand side of \eqref{E:plac1} takes
three distinct values whereas the left hand side takes at most two
distinct values, an impossibility. Thus, $3 \mid s$, and it follows that $(n^2 -
m^2)/(m,n)$ is divisible by $k$. Using this fact in \eqref{E:plac1},
we obtain the following:
    $$
    \sigma_{c,d} \equiv nc + md - nd - mc \equiv \sigma_{c(t),d(t)} \pmod{k}
    $$
for all $t \in \ZZ$, thus verifying the first claim.
\end{proof}

\begin{lemma}\label{L:2:3}
Let $k$ and $r$ be fixed integers, and let $n$ and $m$ be integers
such that $k$ divides $(n^2 - m^2)/(n,m)$. Further, assume that
whenever $c$ and
    $d$ are integers such that $nc + md = (n,m)$, then the following
    congruence holds:
    \begin{equation}\label{L:fund-cong22}
        nc + md \equiv nd + mc + r \pmod{k}.
    \end{equation}
If $c'$ and $d'$ are integers such that $nc' + md' = \ell\gcd(n,m)$
for some integer $\ell$, then we have
    $$
    nc' + md' \equiv nd' + mc' + \ell r \pmod{k}.
    $$
\end{lemma}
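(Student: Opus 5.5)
Write $g=(n,m)$ and fix integers $c_0,d_0$ with $nc_0+md_0=g$; by hypothesis \eqref{L:fund-cong22} holds for $(c_0,d_0)$. The plan is to express $(c',d')$ in terms of $(\ell c_0,\ell d_0)$ plus a homogeneous solution, and then compare the two sides of the desired congruence.

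First I would take $(\ell c_0,\ell d_0)$ as a particular solution of $nx+my=\ell g$. The standard description of solutions of a linear Diophantine equation, as in the proofs of Lemmas~\ref{L:2:2} and~\ref{L:2:4}, then gives an integer $t$ with
\[
c'=\ell c_0+t\,\frac{m}{g},\qquad d'=\ell d_0-t\,\frac{n}{g}.
\]

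Next I would compute the difference $D(c',d'):=nc'+md'-nd'-mc'$. It is bilinear in $(n,m)$ and linear in $(c',d')$, so substituting the expressions above gives
\[
D(c',d')=\ell\,(nc_0+md_0-nd_0-mc_0)+t\,\frac{n m+ n^2-m^2-nm}{g}\cdot(\text{sign})
\]
Routine bookkeeping shows that the $t$-term equals $t(n^2-m^2)/g$ up to sign, exactly as in \eqref{E:plac1}. By the hypothesis $k\mid (n^2-m^2)/g$, this term vanishes modulo $k$. Applying \eqref{L:fund-cong22} to $(c_0,d_0)$ gives
\[
D(c',d')\equiv \ell r \pmod{k},
\]
which is the claim.

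Two remarks on the argument. The case $\ell=0$ is covered by the same computation, since $(0,0)$ is then a particular solution. I expect no real obstacle here. The only point needing care is that the hypothesis is used at a single solution $(c_0,d_0)$ of $nx+my=g$, together with the divisibility $k\mid (n^2-m^2)/g$. The assumption that \eqref{L:fund-cong22} holds for every such pair is therefore not needed beyond one instance, and indeed it is automatic from the divisibility.
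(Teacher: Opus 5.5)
Your proof is correct and is essentially the paper's argument. You write $(c',d')=(\ell c_0+tm/g,\ \ell d_0-tn/g)$, note that the $t$-contribution is $t(n^2-m^2)/g\equiv 0 \pmod{k}$, and apply the hypothesis at a single solution $(c_0,d_0)$; the sign you left unspecified is in fact $+$, since $nc'+md'-nd'-mc'=(n-m)(c'-d')$.
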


\begin{proof}
    We have that $nc' + md' = \ell(n,m) = nc\ell + nd\ell$. Thus, there
    exists an integer $t$ such that $c' = c\ell + tm/(n,m)$ and $d' =
    d\ell - tn/(n,m)$. It now follows that
    $$
    nd' + mc' \equiv nd\ell + mc\ell - t\cdot \dfrac{n^2 - m^2}{(n,m)} \pmod{k}.
    $$
    Notice that on the rightmost side above, the term $(n^2 -
    m^2)/(n,m)$ vanishes by assumption. Now, by
    \eqref{L:fund-cong22}, the rightmost side is therefore congruent to
    $nc\ell + md\ell - \ell r$. But since $nc\ell + md\ell =
    nc' + md'$, it now follows that
    $$
    nc' + nd' - \ell r \equiv nd' + mc' \pmod{k},
    $$
    and the lemma is proved.
\end{proof}

\begin{proposition}\label{P:Xmn}
Theorem~\ref{theorem:main} holds for the characters $X_{(m,n)}$
($0\le m<n\le q-2$) of $\GL_2(q)$.
\end{proposition}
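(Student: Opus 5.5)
The plan is to show that the whole reduced value set \eqref{E:X-mn} lies in a single cyclotomic field $\QQ(\varepsilon^{g})$, where $g:=(n,m)$, and then to exhibit one value whose conductor already equals $c(\QQ(\varepsilon^{g}))$. Every exponent $(m+n)a$ and $nc+md$, $nd+mc$ is a multiple of $g$, so $\overline{\mathcal{VS}}(X_{(m,n)})\subseteq\QQ(\varepsilon^{g})$. Hence $c(X_{(m,n)})$ divides $c(\varepsilon^{g})$; this is the order $N=(q-1)/(q-1,g)$ of $\varepsilon^{g}$, or $N/2$ when $N\equiv 2\pmod 4$. The natural candidate is
\[
z:=\varepsilon^{nc+md}+\varepsilon^{nd+mc}=\varepsilon^{g}+\varepsilon^{nd+mc},
\]
with integers $c,d$ satisfying $nc+md=g$, reduced modulo $q-1$ so that they lie in the allowed range. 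If $z$ cannot be written as a sum of fewer than two roots of unity, Lemma~\ref{L:Lox} gives $\varepsilon^{g}\in\QQ_{c(z)}$. Hence $c(z)=c(\varepsilon^{g})$, which is divisible by $c(X_{(m,n)})$, so equality holds.

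The remaining work is the degenerate cases. Since $\varepsilon^{nc+md}\neq\varepsilon^{nd+mc}$ in the relevant situation, $z$ is never of the form $2\xi$. So the failure of minimality means either $z=0$ or $z=-\xi$ for a single root of unity $\xi$. If some choice of $(c,d)$ with $nc+md=g$ gives a nondegenerate $z$, we are done; note that the case $c\equiv d\pmod{q-1}$ must be handled by shifting $(c,d)$ along $c+tm/g$, $d-tn/g$. So assume every choice is degenerate, and distinguish two subcases.

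\textbf{Subcase $z=0$ for all choices.} Lemma~\ref{L:2:2} with $k=q-1$ gives $(q-1)\mid (n^2-m^2)/g$. Lemma~\ref{L:2:3}, with $r=(q-1)/2$, then shows that every value with $nc'+md'=\ell g$ equals
\[
\varepsilon^{\ell g}\bigl(1+(-1)^{\ell}\bigr).
\]
So the reduced value set consists, up to rational multiples, of the roots $\varepsilon^{2jg}$ and $\varepsilon^{(m+n)a}$. The conductor is therefore that of the cyclic group generated by $\varepsilon^{2g}$ and $\varepsilon^{m+n}$. Both generators are themselves values, so it suffices to show that one of them generates a field of the same conductor.

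\textbf{Subcase $z=-\xi$ for all choices.} Lemma~\ref{L:2:4} with $k=q-1$ fixes the sign $\sigma$ and again gives $(q-1)\mid(n^2-m^2)/g$. Lemma~\ref{L:2:3}, with $r=\sigma(q-1)/3$, then computes every value explicitly as
\[
\varepsilon^{\ell g}\bigl(1+\zeta_3^{-\sigma\ell}\bigr),
\]
which is $2\varepsilon^{\ell g}$ or a single root of unity. Again the value set reduces to a set of roots of unity in an explicit cyclic group, together with the $\varepsilon^{(m+n)a}$.

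The main obstacle is this final step. Roots of unity generating a cyclic group need not include a generator, so $\mathrm{lcm}$ of their orders need not be attained by a single value. My plan is to exploit the congruence $(q-1)\mid(n^2-m^2)/g=\frac{(n-m)}{g}(n+m)$ to compare $\mathrm{ord}(\varepsilon^{m+n})$ with $\mathrm{ord}(\varepsilon^{2g})$ (respectively with the orders of $\varepsilon^{\ell g}\zeta_3^{\pm\ell}$). I would split according to the parity of $(m+n)/g$ and its divisibility by $3$. When $(m+n)/g$ is odd, say, the two orders should differ by at most a factor of $2$ in a way absorbed by the convention $c(\zeta_{2t})=c(\zeta_t)$ for odd $t$. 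In the other cases one of the two roots should have order divisible by all the relevant prime powers. If this comparison fails in some residual case, I would fall back on choosing a value with $\ell$ coprime to $6$ in the second subcase, or $\ell=2$ in the first, and check directly that its order is the $\mathrm{lcm}$.
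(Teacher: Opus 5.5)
Your framework is the paper's: Lemma~\ref{L:Lox} for a nondegenerate value with $nc+md=g$, then Lemmas~\ref{L:2:2}, \ref{L:2:4} and \ref{L:2:3} in the degenerate cases. However, the proposal stops exactly where the proof has to be completed, and the order-comparison plan you sketch would not complete it.

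In both subcases you have in fact already shown more than you use. Every value $\varepsilon^{nc'+md'}+\varepsilon^{nd'+mc'}$ is a rational multiple of a power of a single root of unity that is itself, up to a rational factor, a value. In the first subcase this root is $\varepsilon^{2g}$, attained at $\ell=2$. In the second it is $\xi:=\varepsilon^{g}\zeta_3^{\sigma}$: for $3\nmid\ell$ we have $1+\zeta_3^{-\sigma\ell}=-\zeta_3^{\sigma\ell}$, and for $3\mid\ell$ we have $\varepsilon^{\ell g}=\xi^{\ell}$, so the value at $\ell=1$ is $-\xi$. Thus the worry about cyclic groups with no generator among the values does not arise for these values. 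The only open point is whether $\varepsilon^{m+n}$ lies in $\langle\varepsilon^{2g}\rangle$, resp.\ $\langle\xi\rangle$.

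The divisibility $(q-1)\mid(n^2-m^2)/g$ alone does not decide this. Nor can comparing orders handle a hypothetical configuration with $(m+n)/g$ odd. If also $3\mid(m+n)/g$ and $12\mid N$, every value would be a rational multiple of a root of unity of order dividing $N/2$ or $N/3$, while the conductor would be $N$. So what you must show is that such configurations never arise.

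The missing idea is to apply Lemma~\ref{L:2:3} (equivalently, your own formula for the values) to the pair $c'=d'=1$. There $nc'+md'=nd'+mc'=m+n=\ell_0 g$, so the congruence becomes $\ell_0 r\equiv 0\pmod{q-1}$, with $r=(q-1)/2$, resp.\ $r=\sigma(q-1)/3$. This forces $2\mid\ell_0$, resp.\ $3\mid\ell_0$. Hence $\varepsilon^{m+n}$ is a power of $\varepsilon^{2g}$, resp.\ of $\varepsilon^{3g}=\xi^{3}$. Therefore $\QQ(X_{(m,n)})$ equals $\QQ(\varepsilon^{2g})$, resp.\ $\QQ(\xi)$, which is generated by one value. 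This is exactly how the paper closes its cases (II) and (III).

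Your case split is also incomplete. After the nondegenerate case you treat the situation where every solution of $nc+md=g$ gives $0$, and the one where every solution gives a single root of unity. You do not treat the mixed situation where both occur. The paper handles it as case (IV). It adds a solution of the first kind to $2^{2^j}$ times one of the second kind, with $2^{2^j}+1$ coprime to $q-1$. The resulting value has exponents differing by $\pm(q-1)/6$, so it is nondegenerate, and its first term $\varepsilon^{(2^{2^j}+1)g}$ has the same order as $\varepsilon^{g}$; then Lemma~\ref{L:Lox} applies.

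Alternatively, you can show this case is empty. As $(c,d)$ runs over the solutions, $(n-m)(c-d)$ runs through a coset of the subgroup of $\ZZ/(q-1)\ZZ$ generated by $(n^2-m^2)/g$. A coset meeting both $(q-1)/2$ and $\pm(q-1)/3$ forces that subgroup to contain $\pm(q-1)/6$, an element of order $6$. The coset then has at least six elements, so it cannot lie in the degenerate set $\{(q-1)/2,\pm(q-1)/3\}$, which has at most three residues. Either way, this case needs an argument.
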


\begin{proof}
Fix integers $m$ and $n$, and let $\varepsilon$ be a primitive
$(q-1)$-th root of unity, as before. Given an integer $t$, the order
of $\varepsilon^t$ is given by $(q-1)/(q-1,t)$. In particular,
notice that among all roots of unity $\varepsilon^{nc + md}$ for
integers $c$ and $d$, the order is maximized when $c$ and $d$ are
chosen such that $nc + md = \ell(n,m)$ where $\ell$ is some integer
relatively prime to $q - 1$. Notice that for any integers $c'$ and
$d'$, the root $\varepsilon^{nc' + md'}$ is equal to some power of
$\varepsilon^{nc + md}$. We have that $\QQ(\varepsilon^{nc' + md'})
\subseteq \QQ(\varepsilon^{nc + md})$ for all integer pairs
$(c',d')$, and most importantly, by considering the values taken by
$X_{(m,n)}$ in \eqref{E:X-mn}, we conclude \[\QQ_{c(X_{(m,n)})}
\subseteq \QQ(\varepsilon^{nc + md}).\]

\medskip

(I) If there exists $c$ and $d$ such that $nc + md = (n,m)$ and
$\varepsilon^{nc + md} + \varepsilon^{nd + mc}$ is neither zero nor
expressible as a single root of unity, then, by Lemma~\ref{L:Lox},
    $$
\QQ(\varepsilon^{nc + md}) = \QQ_{c(\varepsilon^{nc + md} +
\varepsilon^{nd + mc})} \subseteq \QQ_{c(X_{(m,n)})}.
    $$
We deduce that $c(X_{(m,n)})  = c(\varepsilon^{nc + md} +
\varepsilon^{nd + mc})$, and the proposition follows in this case.

For the remainder of the proof, we shall assume that for each
integer $c$ and $d$ with $nc + md = (n,m)$, the sum $\varepsilon^{nc
+ md} + \varepsilon^{nd + mc}$ is either zero or a single root of
unity.

\medskip

(II) Consider first the case where for any integers $c$ and $d$ such
that $nc + md = (n,m)$, the sum $\varepsilon^{nc + md} +
\varepsilon^{nd + mc} = 0$. Equivalently, we have the following
congruence:
    $$
    nc + md \equiv nd + mc +\dfrac{q-1}{2}\pmod{q-1}.
    $$
This is precisely the assumption in Lemma~\ref{L:2:2} with $k =
q-1$. (The assumption that $2\mid(q - 1)$ is satisfied since
$\varepsilon^{nc + md} = -\varepsilon^{nd + mc}$, which forces one
of these two roots to have even order.) %We claim that
%the conductor is attained at the value $\varepsilon^{2nc +
%2md}+\varepsilon^{2nd + 2mc}$.
In particular, Lemma~\ref{L:2:3} applies with $r = (q-1)/2$, and we
conclude that whenever $nc' + md'$ is an odd multiple of $(n,m)$, we
have $\varepsilon^{nc' + md'} + \varepsilon^{nd' + mc'} = 0$. On the
other hand, if $nc' + md'$ is an even multiple of $(n,m)$, then the
same lemma implies that $\varepsilon^{nc' + md'} + \varepsilon^{nd'
+ mc'} = 2\varepsilon^{nc' + md'}$. Notice also that whenever $nc' +
md'$ is an even multiple of $(n,m)$, then we have that
    $$
    \dfrac{q-1}{(q-1,2(n,m))} \cdot \dfrac{(q-1,nc' + md')}{q-1} \in \ZZ.
    $$
We conclude that each such root of the form $\varepsilon^{nc' +
md'}$ is some power of $\varepsilon^{2nc + 2md}$. In particular, the
values $\varepsilon^{nx + my}+\varepsilon^{ny + mx}$ of $X_{(m,n)}$
all lie within $\QQ(\varepsilon^{2nc + 2md})$. It remains to show
that the values $\varepsilon^{(m+n)x}$ of $X_{(m,n)}$ also lie in
$\QQ(\varepsilon^{2nc + 2md})$. To this end, it suffices to show
that $n + m$ is an even multiple of $(n,m)$. Were this not true,
then by invoking Lemma~\ref{L:2:3} with $c' = d' = 1$, this would
imply that $n + m \equiv n + m + (q-1)/2 \pmod{q-1}$, an
impossibility. Now, the value $\varepsilon^{2nc + 2md}$ is
attained by $X_{(m,n)}$ if $2c \equiv 2d \pmod{q-1}$, or the value
$2\varepsilon^{2nc + 2md}$ is attained at $\varepsilon^{2nc +
2md}+\varepsilon^{2nd + 2mc}$ otherwise. We conclude that
$\QQ(X_{(m,n)}) = \QQ(\varepsilon^{2nc + 2md})$, verifying the
proposition in this case.

\medskip

(III) Now, we assume that for any $c$ and $d$ with $nc + md =
(n,m)$, we have $\varepsilon^{nc + md} + \varepsilon^{nc + md} =
-\xi_{c,d}$ for some root of unity $\xi_{c,d}$. In particular, this
gives us a vanishing sum of three roots of unity:
    $$
    \varepsilon^{nc + md} + \varepsilon^{nd + mc} + \xi_{c,d} = 0.
    $$
We claim that $\QQ(X_{(m,n)}) = \QQ(\xi_{c,d})$. Notice that if
$\QQ(\xi_{c,d}) = \QQ(\varepsilon^{nc + md})$, then the result
follows in this case by our remarks in the first paragraph.
Otherwise, we have that $[\QQ(\varepsilon^{nc + md}):
\QQ(\xi_{c,d})] = 3$. For any vanishing sum of three roots of
unity, we may factor out a root of unity $\xi$ which yields
    $$
    \xi + \xi\zeta_3+ \xi\zeta_3^2 = 0,
    $$
where $\zeta_3$ is a primitive third root of unity. In particular,
we have that
    $$
    \xi_{c,d}:= \varepsilon^{nc + md \pm (q-1)/3} = \varepsilon^{nd + mc \mp (q-1)/3}.
    $$
for any choice of $c$ and $d$ where $nc + md = (n,m)$. Equivalently,
we have that
    $$
    nc + md \equiv nd + mc + \sigma_{c,d}\cdot \dfrac{q-1}{3} \pmod{q-1}
    $$
for some sign $\sigma_{c,d}$ depending on $c$ and $d$. However, by
Lemma~\ref{L:2:4}, the sign $\sigma_{c,d}$ is actually independent
from $c$ and $d$, and we may write $\sigma := \sigma_{c,d}$:
    $$
    nc + md \equiv nd + mc + \sigma\cdot \frac{q-1}{3} \pmod{q-1}
    $$
Thus, the assumptions of Lemma~\ref{L:2:3} are satisfied with $k = q-1$ and $r = \sigma(q-1)/3$. In
particular, let $c'$ and $d'$ be integers where $nc' + md' =
\ell(n,m)$ for some integer $\ell$. One verifies using
Lemma~\ref{L:2:4} that $\varepsilon^{nd' + mc'} =
\varepsilon^{nd\ell + mc\ell}$, and so
    $$
    \varepsilon^{nc' + md'} + \varepsilon^{nd' + mc'} = \varepsilon^{nc\ell + md\ell} +
    \varepsilon^{nd\ell + mc\ell} = \xi_{c,d}^\ell(\zeta_3^\ell + \zeta_3^{2\ell}).
    $$
The rightmost side above equals $2\xi_{c,d}^\ell$ if $\ell \mid 3$
and $-\xi_{c,d}^\ell$ otherwise. In either case, $\varepsilon^{nc' +
md'} + \varepsilon^{nd' + mc'} \in \QQ(\xi_{c,d})$. This shows that
all values $\varepsilon^{nx + my} + \varepsilon^{ny + mx}$ of
$X_{(m,n)}$ lie in $\QQ(\xi_{c,d})$. It remains to show that other
values of $X_{(m,n)}$ also lie in $\QQ(\xi_{c,d})$. To this end, it
suffices to show that $n + m = \ell(n,m)$ where $3 \mid \ell$.
However, by Lemma~\ref{L:2:3}, we have $n + m \equiv n + m +
\frac{\sigma\ell (q-1)}{3} \pmod{q-1}$, which can only hold if $3
\mid \ell$. This proves the result in this case.

\medskip

(IV) The last remaining case is where certain choices of $c$ and $d$
with $nc + md = (n,m)$ yield $\varepsilon^{nc + md} +
\varepsilon^{nd + mc} = 0$ and other choices result in
$\varepsilon^{nc + md} +
\varepsilon^{nd + mc}$ equaling a single root of unity. %We claim
%that this assumption allows us to choose a group element $g$ with
%$\QQ(\chi_{q+1}^{(n,m)}) = \QQ(\epsilon^{na + mb})$.
Let $c_1, c_2, d_1$, and $d_2$ be integers where $nc_1 + md_1 = nc_2
+ md_2 = (n,m)$. Further assume that $\varepsilon^{nc_1 + md_1} +
\varepsilon^{nd_1 + mc_1} = 0$ and that $\varepsilon^{nc_2 + md_2} +
\varepsilon^{nd_2 + mc_2}$ equals a single root of unity. We obtain
the following congruences as results:
    \begin{equation}\label{E:cong-one4}
        nc_1 + md_1 \equiv nd_1 + mc_1 + \dfrac{q-1}{2} \pmod{q-1},
    \end{equation}
    \begin{equation}\label{E:cong-two4}
        nc_2 + md_2 \equiv nd_2 + mc_2 \pm \frac{q-1}{3} \pmod{q-1}.
    \end{equation}
We may choose integers $\ell_1$ and $\ell_2$ satisfying the
following properties: $\ell_1$ is odd, $3 \nmid \ell_2$, and
$(q-1,\ell_1 + \ell_2) = 1$. For instance, we may take $\ell_1 = 1$
and $\ell_2 = 2^{2^j}$ for $j$ sufficiently large. Numbers of the
form $2^{2^j} + 1$ are called Fermat numbers, and any two distinct
Fermat numbers are relatively prime, so we may choose $j$ large
enough where $2^{2^j} +1$ shares no factors in common with $q - 1$. We
now evaluate the following value of $X_{(n,m)}$:
    \begin{equation}\label{E:comb-cong}
\alpha := \varepsilon^{ n(c_1 + 2^{2^j}c_2) + m(d_1 + 2^{2^j}d_2)} +
\varepsilon^{n(d_1 + 2^{2^j}d_2) + m(c_1 + 2^{2^j}c_2)}.
    \end{equation}
Now, multiplying the congruence \eqref{E:cong-two4} by $2^{2^j}$ and
adding \eqref{E:cong-one4}, we have that
   \[\begin{aligned}
   n(c_1 + 2^{2^j}c_2) &+ m(d_1 + 2^{2^j}d_2)\equiv\\
     & n(d_1 + 2^{2^j}d_2) + m(c_1 + 2^{2^j}c_2) + \dfrac{q-1}{2} \pm 2^{2^j}\cdot \dfrac{q-1}{3} \pmod{q-1}.
  \end{aligned}\]
Combining the last two terms above, we obtain
    \[\begin{aligned}
    n(c_1 + 2^{2^j}c_2) &+ m(d_1 + 2^{2^j}d_2) \equiv\\
    &n(d_1 + 2^{2^j}d_2) + m(c_1 + 2^{2^j}c_2) + \sigma\cdot\dfrac{q-1}{6} \pmod{q-1}
    \end{aligned}\]
for some $\sigma \in \{-1,1\}$. In particular, if
$\alpha$ were to be either zero or a root of unity, then
the constant term on the right hand side above must either be $(q-1)/2$ or
$\pm (q-1)/3$, which is not the case. Thus, $\alpha$ is
nonzero and is not equal to a root of unity. Now, observe that
$\varepsilon^{ n(c_1 + 2^{2^j}c_2) + m(d_1 + 2^{2^j}d_2)} =
\varepsilon^{(2^{2^j} + 1)(n,m)}$ has order equal to
    $$
    \dfrac{q-1}{(q-1, (2^{2^j} + 1)(n,m))} = \dfrac{q-1}{(q-1, (n,m))}
    $$
where the above equality holds since $(q-1, 2^{2^j} + 1) = 1$ by our
choice of $j$. In particular, by Lemma~\ref{L:Lox}, we have that
$\QQ(\varepsilon^{(n,m)}) = \QQ_{c(\alpha)} \subseteq
\QQ_{c(X_{(m,n)})}$, which completes the proof.
\end{proof}

%%%%%%%%%%%%%%%%%%%%%%%%%%%%%%%%%%%%%%%%

\subsection{The family $Y_{(n)}$}

\begin{proposition}\label{P:Yn}
Theorem~\ref{theorem:main} holds for the characters $Y_{(n)}$
(\(n\in E_q\) and \((q+1)\nmid n\)) of $\GL_2(q)$.
\end{proposition}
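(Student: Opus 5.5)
The plan is to reduce everything to one root of unity and then split into three cases according to the shape of the value at $e=1$, in the same spirit as Proposition~\ref{P:Xmn}. Put $\omega:=\eta^{n}$, a root of unity of order $(q^2-1)/(q^2-1,n)$. By \eqref{E:Y-n}, every value in $\overline{\mathcal{VS}}(Y_{(n)})$ is a polynomial in $\omega$ with integer coefficients: it is $\omega^{(q+1)a}$ or $-(\omega^{e}+\omega^{eq})$. Hence
\[
\QQ_{c(Y_{(n)})}\subseteq \QQ(\omega).
\]
The class of $e=1$ is admissible, since $(q+1)\nmid 1$ (after replacing $1$ by its representative in $E_q$). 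So $z_1:=-(\omega+\omega^{q})$ is a value of $Y_{(n)}$.

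\textbf{Case (I): $\omega+\omega^{q}$ is neither $0$ nor a single root of unity.} Then Lemma~\ref{L:Lox} gives $\QQ_{c(z_1)}=\QQ(\omega,\omega^{q})=\QQ(\omega)$. Combined with the inclusion above, this yields $c(Y_{(n)})=c(z_1)$, and the element realizing $z_1$ works.

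\textbf{Case (II): $\omega+\omega^{q}=0$, i.e.\ $\omega^{q-1}=-1$.} Then $\omega^{eq}=(-1)^e\omega^e$, so $\omega^e+\omega^{eq}$ equals $0$ for odd $e$ and $2\omega^e$ for even $e$. Also $\omega^{q+1}=\omega^{q-1}\omega^{2}=-\omega^{2}$. Thus every value lies in $\QQ(\omega^{2})$. The value at $e=2$ is $-2\omega^2$; it is admissible since $(q+1)\nmid 2$, or one uses an equivalent representative in $E_q$. It has conductor $c(\omega^2)$, which settles this case.

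\textbf{Case (III): $\omega+\omega^q$ is a single root of unity.} By the three-term vanishing-sum fact recalled before Lemma~\ref{L:2:4}, $\zeta:=\omega^{q-1}$ is a primitive cube root of unity. Using $1+\zeta^{e}=-\zeta^{-e}$ for $3\nmid e$, we get $\omega^e+\omega^{eq}=\omega^{e}(1+\zeta^{e})=-\xi^{e}$ with $\xi:=\omega\zeta^{-1}$. For $3\mid e$ we get $2\omega^{e}$. Since $\omega=\xi\zeta$, we have $\omega^{3}=\xi^{3}$, so $2\omega^e=2\xi^e$ whenever $3\mid e$. Finally,
\[
\omega^{q+1}=\zeta\omega^{2}=\zeta^{3}\xi^{2}=\xi^{2}.
\]
So all values lie in $\QQ(\xi)$, while $z_1=\xi$ is itself a value. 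Hence $c(Y_{(n)})=c(\xi)=c(z_1)$.

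The only delicate point is the case division: justifying that the only degenerate possibilities for $\omega+\omega^q$ are $0$ or a root of unity with $\omega^{q-1}$ of order $2$ or $3$. This follows from the same geometric fact about vanishing sums of two and three roots of unity used in Lemmas~\ref{L:2:2} and \ref{L:2:4}. The rest is bookkeeping with exponents. Unlike the $X_{(m,n)}$ family, no Diophantine lemmas are needed, because the single exponent $e=1$ already produces the generator $\omega$.
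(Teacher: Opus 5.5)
Your proof is correct and follows the paper's three-way split: Loxton's lemma when $\eta^n+\eta^{nq}$ is neither $0$ nor a root of unity, the value at $e=2$ when it vanishes, and the value at $e=1$ when it is a single root of unity. The one difference is a simplification in Case (III): you get $\eta^{n(q+1)}=\omega^{q-1}\omega^{2}=\zeta\omega^{2}=\xi^{2}\in\QQ(\xi)$ directly, which bypasses the paper's contradiction argument for $3\mid(q+1)$ (that divisibility is in any case immediate from $\zeta^{q+1}=\omega^{q^2-1}=1$).
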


\begin{proof}
Fix an integer $n\in E_q$ such that \((q+1)\nmid n\), and recall
that $\eta$ is a primitive $(q^2-1)$-th root of unity. By
considering the value set for $Y_{(n)}$ in \eqref{E:Y-n}, we see
that $\QQ_{c(Y_{(n)})} \subseteq \QQ(\eta^n)$. Observe that if
$\eta^n + \eta^{nq}$ is neither zero nor a single root of unity,
then by Lemma \ref{L:Lox}, we have that $\QQ(\eta^n) = \QQ_{c(\eta^n
+ \eta^{nq})} \subseteq \QQ_{c(Y_{(n)})}$, and thus, the proposition
holds under these assumptions. We shall assume for the remainder of
the proof that $\eta^n + \eta^{nq}$ is either zero or equal to a
single root of unity.

First assume that $\eta^n + \eta^{nq} = 0$. This assumption is
equivalent to the following congruence:
    $$
        n \equiv nq + \dfrac{q^2 - 1}{2} \pmod{q^2 - 1}.
    $$
In particular, this forces $q^2 - 1$ to be even since $\eta^n =
-\eta^{nq}$. Consider the values of $Y_{(n)}$ of the form
$-\eta^{ne} - \eta^{nqe}$. Multiplying the above congruence by $e$,
one sees that $-\eta^{ne} - \eta^{nqe} = 0$ if $e$ is odd and
$-\eta^{ne} - \eta^{nqe} = -2\eta^{ne}$ if $e$ is even. In
particular, all these values of $Y_{(n)}$ lie in the field
$\QQ(\eta^{2n})$, and we claim the conductor of $Y_{(n)}$ is
attained at the value $-\eta^{2n} - \eta^{2nq}$. It therefore
suffices to show that $\eta^{n(q+1)} \in \QQ(\eta^{2n})$. However,
we have established that $q^2 - 1$ is even, which forces $q+1$ to be
even as well. Thus, $\eta^{n(q+1)}$ is a power of $\eta^{2n}$. We
have shown that all values of $Y_{(n)}$ lie in $\QQ(\eta^{2n}) =
\QQ(-\eta^{2n} - \eta^{2nq})$. The result is now proven in this
case.

We now complete the proof by considering the case where $\eta^{n} +
\eta^{nq}$ equals some root of unity, say $-\xi_n$. As in the proof
of Proposition~\ref{P:Xmn}, this implies that we may write $\eta^{n}
= \xi_n\zeta_3^{j}$ and $\eta^{nq} = \xi_n\zeta_3^{2j}$ where
$\zeta_3$ is a primitive third root of unity and $j \in \{1,2\}$. We
obtain the following congruence from this relation:
    $$
        n \pm \dfrac{q^2 - 1}{3} \equiv nq \mp \dfrac{q^2 - 1}{3} \pmod{q^2 - 1},
    $$
which we may rewrite as
    \begin{equation}\label{E:Y-cong}
        n  \equiv nq \pm \dfrac{q^2 - 1}{3} \pmod{q^2 - 1}.
    \end{equation}
Multiplying both sides above by some integer $e$, we obtain
    $$
        ne  \equiv nqe \pm e\cdot\dfrac{q^2 - 1}{3} \pmod{q^2 - 1},
    $$
which implies that the value $-(\eta^{ne}+\eta^{nqe})$ of $Y_{(n)}$
is either $-\xi_{n}^e$ if $3 \nmid e$ and $2\xi_n^{e}$ if $3
\mid e$. In particular, all these values lie in the field
$\QQ(\xi_n) = \QQ(-(\eta^n+\eta^{nq}))$.

Still assuming that $\eta^{n} + \eta^{nq} = -\xi_n$, it now
suffices to show that $\eta^{n(q+1)} \in \QQ(\xi_{n})$ to complete
the proof. We claim that if suffices to show that $q+1$ is divisible
by 3. First, notice that for any integer $c$, we have that
$\eta^{3nc} = (\xi_n\zeta_3^j)^{3c} = \xi_n^{3c}$. We observe
that if $3 \mid (q+1)$, then $\eta^{n(q+1)}$ is a power of
$\xi_n$, as desired. Now, observe that our assumption forces
$\eta^n$ to have order divisible by 3, implying that $3 \mid (q^2 -
1)$. In particular, $q$ is not a power of 3. If $q + 1$ is
indivisible by 3, then we necessarily have that $q - 1$ is divisible
by 3. We take \eqref{E:Y-cong} and multiply through by $q+1$:
    $$
        n(q+1) \equiv nq(q+1) \pm (q + 1)\cdot\dfrac{q^2 - 1}{3} \pmod{q^2 - 1}.
    $$
Rearranging, we obtain:
    $$
        nq + n \equiv nq^2 + nq \pm 2\cdot\dfrac{q^2 - 1}{3}  + (q - 1)\cdot\dfrac{q^2 - 1}{3} \pmod{q^2 - 1}.
    $$
Notice that as $q-1$ is divisible by 3, the last term on the right
hand side above vanishes. Further rearranging, we obtain
    $$
        n(1 - q^2) \equiv  \mp \dfrac{q^2 - 1}{3} \pmod{q^2 - 1}.
    $$
The left hand side above is zero modulo $q^2 - 1$, whereas the right
hand side is certainly nonzero modulo $q^2 - 1$, a contradiction.
Thus, $3 \mid (q+1)$, and the proposition holds in this case as
well.
\end{proof}

\begin{proposition}\label{prop:GL2q}
Theorem~\ref{theorem:main} holds for all irreducible characters of
$\GL_2(q)$.
\end{proposition}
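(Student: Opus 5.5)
This is an assembly argument: we run through the classification of \(\Irr(\GL_2(q))\) recalled in \S\ref{subsec:GL2q} and check each of the four families.

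\emph{Unipotent-type families.} The irreducible characters fall into the linear characters \(\theta\circ\det\), the twisted Steinberg characters \(\St\otimes\theta\), the characters \(X_{(m,n)}\) with \(0\le m<n\le q-2\), and the characters \(Y_{(n)}\) with \(n\in E_q\), \((q+1)\nmid n\).

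1. For a linear character \(\chi=\theta\circ\det\), the value set is \(\{\xi^i: i\in\ZZ_{\ge0}\}\) for some root of unity \(\xi\). Taking \(g\) with \(\chi(g)=\xi\) (for instance \(g=\diag(x,1)\) with \(\theta(x)=\xi\)) gives \(c(\chi)=c(\xi)=c(\chi(g))\).

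2. For \(\St\otimes\theta\), the character \(\St\) is rational-valued. Hence \(\mathcal{VS}(\St\otimes\theta)\) consists of rational multiples of values of \(\theta\circ\det\), up to removal of rational values. We would make this concrete by evaluating at an element of the form \(\diag(x,1)\) with \(\theta(x)=\xi\). There \(\St\) takes the value \(1\), since the permutation character on the projective line has value \(2\) at a regular split semisimple element. So \((\St\otimes\theta)(g)=\xi\), and the conductor is again attained.

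\emph{Remaining families.} The characters \(X_{(m,n)}\) are handled by Proposition~\ref{P:Xmn}. The characters \(Y_{(n)}\) are handled by Proposition~\ref{P:Yn}.

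Since these four families exhaust \(\Irr(\GL_2(q))\) by the Jordan--Schur classification, the statement follows.

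The only point needing any care is the Steinberg twist: we must produce an explicit element at which \(\St\) is a nonzero rational and \(\theta\circ\det\) takes a generator of its value group. The element \(\diag(x,1)\) settles this when \(q>2\); if \(x=1\) is forced, the character is rational anyway. The substantive work lies in the two preceding propositions.
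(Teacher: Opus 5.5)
Your proposal is correct and follows the paper's own proof. Like the paper, you dispatch the linear and twisted Steinberg characters directly and cite Propositions~\ref{P:Xmn} and \ref{P:Yn} for the families $X_{(m,n)}$ and $Y_{(n)}$; your evaluation at $g=\diag(x,1)$, where $\St(g)=1$, is a more careful version of the paper's brief remark that $\St\otimes\theta$ has essentially the same values as $\theta$.
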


\begin{proof}
This follows from Propositions~\ref{P:Xmn} and \ref{P:Yn} and the
discussion at the beginning of Subsection~\ref{subsec:GL2q}.
\end{proof}

%%%%%%%%%%%%%%%%%%%%%%%%%%%%%%%%%%%%%%%%%%
%%%%%%%%%%%%%%%%%%%%%%%%%%%%%%%%%%%%%%%%%%%%

\section{Two-dimensional special linear groups
$\SL_2(q)$}\label{sec:SL2q}

\subsection{Character values of $\SL_2(q)$}
Following \cite[\S2.1]{GM20}, we describe the irreducible characters
of $G=\SL_2(q)$ through the restrictions of those of $\GL_2(q)$.

The group \(G = \SL_2(q)\) has two unipotent characters: the trivial
character and the Steinberg character. Both arise as restrictions of
the corresponding linear and Steinberg characters of \(\GL_2(q)\),
and take only rational values; thus there is nothing to prove in
these cases. It remains to consider the irreducible constituents of
the restrictions of the characters \(X_{(m,n)}\) and \(Y_{(n)}\).

The multiplication of characters induces natural actions of the
group of linear characters of $\GL_2(q)$ on the sets $\{X_{(m,n)}\}$
and $\{Y_{(n)}\}$. Characters in the same orbit have the same
restrictions to $\SL_2(q)$ and the distinct restrictions are:
\[
\begin{aligned}
X'_{(k)} &:= \Res_{\SL_2(q)}^{\GL_2(q)}\!\bigl(X_{(0,k)}\bigr) \text{ for } 1 \le k \le \lfloor (q-1)/2 \rfloor, \\
Y'_{(n)} &:= \Res_{\SL_2(q)}^{\GL_2(q)}\!\bigl(Y_{(n)}\bigr) \text{
for } 1 \le n \le \lfloor (q+1)/2 \rfloor.
\end{aligned}
\]

When \(q\) is odd and \(k = (q-1)/2\), the character
\(X'_{((q-1)/2)}\) is rational-valued and decomposes as a sum of two
irreducible characters of \(\SL_2(q)\), both having the same field
of values \(\QQ(\sqrt{(-1)^{(q-1)/2}q})\). A similar phenomenon
occurs for the character \(Y'_{((q+1)/2)}\). Since
\(\QQ(\sqrt{(-1)^{(q-1)/2}q})\) is quadratic, the result follows
immediately.

Except for the above special cases, the characters \(X'_{(k)}\) and
\(Y'_{(n)}\) are all irreducible. Their (reduced) value sets can be
derived directly from those of \(X_{(0,k)}\) and \(Y_{(n)}\). In
particular, a reduced value set for
\[
X'_{(k)}, \text{ where } 1 \le k \le \lfloor (q-2)/2 \rfloor,
\]
is
\begin{equation}\label{E:X'SL2q}
\overline{\mathcal{VS}}(X'_{(k)}) = \{\varepsilon^{ka} +
\varepsilon^{-ka} : 1 \le a \le \lfloor (q-2)/2 \rfloor\},
\end{equation}
where \(\varepsilon\) is a primitive \((q-1)\)-th root of unity, as
in the case of \(\GL_2(q)\). Also, a reduced value set for
\[
Y'_{(n)}, \text{ where } 1 \le n \le \lfloor q/2 \rfloor,
\]
is
\begin{equation}\label{E:Y'SL2q}
\overline{\mathcal{VS}}(Y'_{(n)}) = \{\zeta^{nb} + \zeta^{-nb} : 1
\le b \le \lfloor (q-1)/2 \rfloor\},
\end{equation}
where \(\zeta\) is a primitive \((q+1)\)-th root of unity.

\subsection{Proof of Theorem~\ref{theorem:main} for $\SL_2(q)$}

\begin{lemma}\label{L:diss-lem}
Let $\alpha$ be a cyclotomic integer expressible as a sum of two
roots of unity, say $\alpha = \varepsilon_1 + \varepsilon_2$, and
further, assume that $\alpha$ is neither zero nor equal to a single
root of unity. Then $[\QQ_{c(\alpha)}:\QQ(\alpha)] \leq 2$.
\end{lemma}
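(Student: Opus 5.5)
By Lemma~\ref{L:Lox}, since $\alpha$ is neither zero nor a single root of unity, it cannot be written as a sum of fewer than two roots of unity. Hence $\QQ_{c(\alpha)}=\QQ(\varepsilon_1,\varepsilon_2)$. The plan is to bound the degree of $\QQ(\varepsilon_1,\varepsilon_2)$ over $\QQ(\alpha)$ by exhibiting a polynomial of small degree over $\QQ(\alpha)$ that one generator satisfies, and then checking that the second generator adds nothing.

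\textbf{Reduction to one generator.} Write $\varepsilon_2=\varepsilon_1\omega$, where $\omega=\varepsilon_2/\varepsilon_1$ is a root of unity. Then $\QQ(\varepsilon_1,\varepsilon_2)=\QQ(\varepsilon_1,\varepsilon_2)$ is generated over $\QQ(\alpha)$ by $\varepsilon_1$ together with $\varepsilon_2=\alpha-\varepsilon_1$. Since $\varepsilon_2=\alpha-\varepsilon_1$ already lies in $\QQ(\alpha,\varepsilon_1)$, we get
\[
\QQ_{c(\alpha)}=\QQ(\alpha,\varepsilon_1).
\]
So it suffices to show that $\varepsilon_1$ has degree at most $2$ over $\QQ(\alpha)$.

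\textbf{The quadratic.} The key observation is that the product $\varepsilon_1\varepsilon_2$ should lie in $\QQ(\alpha)$. Granting this, $\varepsilon_1$ and $\varepsilon_2$ are the two roots of
\[
X^2-\alpha X+\varepsilon_1\varepsilon_2,
\]
a quadratic whose coefficients lie in $\QQ(\alpha)$, and the bound $[\QQ_{c(\alpha)}:\QQ(\alpha)]\le 2$ follows at once.

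\textbf{Why the product lies in $\QQ(\alpha)$.} Complex conjugation preserves $\QQ_{c(\alpha)}$, and since $\varepsilon_1,\varepsilon_2$ are roots of unity,
\[
\bar\alpha=\varepsilon_1^{-1}+\varepsilon_2^{-1}=\frac{\varepsilon_1+\varepsilon_2}{\varepsilon_1\varepsilon_2}=\frac{\alpha}{\varepsilon_1\varepsilon_2}.
\]
Because $\alpha\neq 0$, this gives $\varepsilon_1\varepsilon_2=\alpha/\bar\alpha$. What remains is to show $\bar\alpha\in\QQ(\alpha)$, and this is the main point. Here I would use that $\QQ(\alpha)$ is a subfield of a cyclotomic field, hence an abelian Galois extension of $\QQ$. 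Complex conjugation therefore restricts to an automorphism of $\QQ(\alpha)$, since every subfield of an abelian extension is normal and so stable under every automorphism of the big field. Consequently $\bar\alpha\in\QQ(\alpha)$, hence $\varepsilon_1\varepsilon_2=\alpha/\bar\alpha\in\QQ(\alpha)$, and the argument is complete.

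The only potential obstacle is this conjugation-stability step; it is resolved by the abelian (normal) nature of subfields of cyclotomic fields. Everything else is formal once Lemma~\ref{L:Lox} identifies $\QQ_{c(\alpha)}$ with $\QQ(\varepsilon_1,\varepsilon_2)$.
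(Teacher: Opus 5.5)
Your proof is correct, and it takes a different route from the paper's.

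\textbf{The paper's route.} The paper fixes an arbitrary $\sigma\in\Gal(\QQ_{c(\alpha)}/\QQ(\alpha))$ and considers $\varepsilon_1+\varepsilon_2-\varepsilon_1^\sigma-\varepsilon_2^\sigma=0$, a vanishing sum of four roots of unity. Such a sum must split into two cancelling pairs, and the pairing $\varepsilon_1=-\varepsilon_2$ is excluded by $\alpha\neq 0$. So $\sigma$ either fixes or swaps $\varepsilon_1,\varepsilon_2$. Since these generate $\QQ_{c(\alpha)}$ by Lemma~\ref{L:Lox}, the Galois group has order at most $2$.

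\textbf{Your route.} You exhibit the quadratic $X^2-\alpha X+\varepsilon_1\varepsilon_2$ and show its constant term lies in $\QQ(\alpha)$. This uses the identity $\bar\alpha=\alpha/(\varepsilon_1\varepsilon_2)$ together with the conjugation-stability of $\QQ(\alpha)$, which holds because subfields of cyclotomic fields are Galois over $\QQ$.

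\textbf{What each approach buys.} Your argument needs no structural fact about vanishing sums, so it is more elementary and self-contained. In fact it uses only $\alpha\neq 0$. The inclusion $\QQ_{c(\alpha)}\subseteq\QQ(\varepsilon_1,\varepsilon_2)$ holds for any sum of roots of unity, and together with your quadratic it already gives the upper bound. So Lemma~\ref{L:Lox} and the hypothesis that $\alpha$ is not a single root of unity are not really needed in your version.

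Your argument also recovers the extra information the paper extracts. Any automorphism fixing $\QQ(\alpha)$ must permute the roots $\varepsilon_1,\varepsilon_2$ of your quadratic, and this swap is exactly what Lemma~\ref{lem:SL2q} later identifies with complex conjugation.

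On the other hand, your trick is special to two terms. For three or more terms, $\bar\alpha$ only yields a ratio of elementary symmetric functions, not the individual coefficients of $\prod_i (X-\varepsilon_i)$. The paper's vanishing-sum style of argument is the one that carries over to longer sums, as in the four-term analysis needed for the Suzuki groups.
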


\begin{proof}
Choose $\sigma \in \Gal(\QQ_{c(\alpha)}/\QQ(\alpha))$. By
Lemma~\ref{L:Lox}, both $\varepsilon_1$ and $\varepsilon_2$ lie in
the field $\QQ_{c(\alpha)}$. Thus, $\varepsilon_1^\sigma$ and
$\varepsilon_2^\sigma$ are well-defined roots of unity. We have that
    $$
\varepsilon_1 + \varepsilon_2 - \varepsilon_1^\sigma -
\varepsilon_2^\sigma = 0.
    $$
This is a vanishing sum of four roots of unity. One may reason
geometrically that either $\varepsilon_1^\sigma = \varepsilon_1$ or
$\varepsilon_1^\sigma = \varepsilon_2$. Likewise, either
$\varepsilon_1^\sigma = \varepsilon_1$ or $\varepsilon_1^\sigma =
\varepsilon_2$. In particular, $\sigma$ either fixes or swaps the
terms of $\alpha$. As $\sigma$ was arbitrary and as $\sigma$ is
determined by its action on $\varepsilon_1$ and $\varepsilon_2$, the
lemma now follows.
\end{proof}

The following lemma handles both families of characters $X'_{(k)}$
and $Y'_{(n)}$.

\begin{lemma}\label{lem:SL2q}
    Let $n$ be a fixed integer, and let $\varepsilon$ be any
    root of unity. If we let $S$ denote the set
    $\{\varepsilon^{na} + \varepsilon^{-na} : a \in \ZZ\}$,
    then $\QQ(S) = \QQ(\varepsilon^{n} + \varepsilon^{-n})$.
\end{lemma}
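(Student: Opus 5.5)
The claim is that $\QQ(S)=\QQ(\beta)$ with $\beta:=\varepsilon^{n}+\varepsilon^{-n}$. The inclusion $\QQ(\beta)\subseteq\QQ(S)$ is immediate, since $\beta\in S$ (take $a=1$). For the reverse inclusion, put $\omega:=\varepsilon^n$ and $\beta_a:=\omega^a+\omega^{-a}$, so that $S=\{\beta_a : a\in\ZZ\}$. We must show that every $\beta_a$ is a polynomial in $\beta_1=\beta$ with rational (indeed integer) coefficients.

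The plan is to use the Chebyshev-type recursion
\[
\beta_{a+1}=\beta_1\beta_a-\beta_{a-1},
\]
which follows from expanding $(\omega+\omega^{-1})(\omega^a+\omega^{-a})=\omega^{a+1}+\omega^{-(a+1)}+\omega^{a-1}+\omega^{-(a-1)}$. Starting from $\beta_0=2$ and $\beta_1=\beta$, induction on $a\ge 1$ gives $\beta_a\in\ZZ[\beta]$ for all $a\ge0$. Since $\beta_{-a}=\beta_a$, this covers all $a\in\ZZ$. Hence $S\subseteq\ZZ[\beta]\subseteq\QQ(\beta)$, and so $\QQ(S)\subseteq\QQ(\beta)$.

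An alternative, Galois-theoretic route would note that $\QQ(\beta)$ is the fixed field of those $\sigma\in\Gal(\QQ(\omega)/\QQ)$ with $\sigma(\omega)=\omega^{\pm1}$. Any such $\sigma$ fixes each $\beta_a$, so every $\beta_a$ lies in $\QQ(\beta)$. The only point needing care on this route is the identification of that fixed field. If $\sigma(\beta)=\beta$, then $\omega+\omega^{-1}=\omega^s+\omega^{-s}$, and $\omega$ and $\omega^s$ are both roots of $x^2-\beta x+1$. That quadratic has only the roots $\omega$ and $\omega^{-1}$, so $\omega^s\in\{\omega,\omega^{-1}\}$.

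I expect no real obstacle here. The recursion argument is self-contained and does not need Lemma~\ref{L:Lox} or Lemma~\ref{L:diss-lem}, so it avoids any case distinction about whether $\beta$ is zero or a single root of unity. I will present the recursion proof.
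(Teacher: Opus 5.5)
Your proof is correct, but it follows a different route from the paper's.

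\textbf{The paper's proof.} It argues Galois-theoretically and by cases. When $\beta=\varepsilon^n+\varepsilon^{-n}$ is neither $0$ nor a single root of unity, Lemma~\ref{L:Lox} gives $\QQ_{c(\beta)}=\QQ(\varepsilon^n)$. Lemma~\ref{L:diss-lem} then gives $[\QQ(\varepsilon^n):\QQ(\beta)]\le 2$. So $\QQ(\beta)$ is the fixed field of complex conjugation, which fixes every element of $S$. The remaining cases ($\beta$ rational, $\beta=0$, $\beta=\pm1$) are settled by listing $S$ explicitly.

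\textbf{Your proof.} The recursion $\beta_{a+1}=\beta\beta_a-\beta_{a-1}$ treats all cases at once. It needs neither Loxton's theorem nor the degree bound. It also yields the stronger statement $S\subseteq\ZZ[\beta]$, valid for any unit $\omega$, not only roots of unity. Your alternative Galois route is essentially the paper's argument, with the quadratic $x^2-\beta x+1$ doing the job of Lemmas~\ref{L:Lox} and~\ref{L:diss-lem}.

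\textbf{What the paper's framing buys.} It is a template that extends to the four-term sums $\alpha_m$ of Proposition~\ref{prop:Su-vanishingsum}. There, the analogous recursion (Newton's identities, using that the four terms are closed under inversion with product $1$) only shows $\alpha_{mb}\in\QQ(\alpha_m,\alpha_{2m})$. The inclusion $\alpha_{2m}\in\QQ(\alpha_m)$ is not automatic. So one genuinely has to control $\Gal(\QQ_{c(\alpha_m)}/\QQ(\alpha_m))$, which the paper does via the degree bound for sums of four roots of unity and the classification of minimal vanishing sums.
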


\begin{proof}
    If $\varepsilon^{n} + \varepsilon^{-n}$ is neither zero
    nor equal to a single root of unity, then by
    Lemma~\ref{L:Lox}, we have that
$\QQ_{c(\varepsilon^{n} + \varepsilon^{-n})} = \QQ(\varepsilon^n)$.
If $\varepsilon^{n} + \varepsilon^{-n}$ is rational, then the lemma
of course holds. Otherwise, by the above lemma,
$\Gal(\QQ(\varepsilon^n) / \QQ(\varepsilon^{n} + \varepsilon^{-n}))$
has one nonidentity automorphism, this being complex conjugation. In
particular,  $\QQ(\varepsilon^{n} + \varepsilon^{-n})$ is exactly
the subfield of $\QQ(\varepsilon^n)$ fixed by complex conjugation.
As all other elements of $S$ are fixed by conjugation, they too must
lie in $\QQ(\varepsilon^{n} + \varepsilon^{-n})$, proving the lemma
in this case.

    On the other hand, if $\varepsilon^{n} + \varepsilon^{-n} = 0$,
    this implies that $\varepsilon^n = \pm i$.
    In this case, it follows that
    $$
    \{\varepsilon^{na} + \varepsilon^{-na} : a \in \ZZ\} = \{0,2,-2\},
    $$
    and the result holds trivially in this case.

    Lastly, if $\varepsilon^{n} + \varepsilon^{-n}$ equals a
    root of unity, then this root must be $\pm 1$ since
    $\varepsilon^{n} + \varepsilon^{-n}$ is real. We conclude
    that $\varepsilon^{n}$ is either a primitive third or
    primitive sixth root of unity, and it follows that
    $$
    \{\varepsilon^{na} + \varepsilon^{-na} : a \in \ZZ\} \subseteq \{\pm 1, \pm 2\},
    $$
    and the result again holds trivially.
\end{proof}

\begin{proposition}\label{prop:main-SL2q}
Theorem~\ref{theorem:main} holds for all irreducible characters of
$\SL_2(q)$.
\end{proposition}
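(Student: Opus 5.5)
The plan is to go through the irreducible characters of $\SL_2(q)$ in the order of the preceding subsection and, in each case, exhibit a single value $z$ with $\QQ(\chi)=\QQ(z)$. This is stronger than what is needed, since $\QQ(\chi)=\QQ(z)$ implies $c(\chi)=c(\QQ(\chi))=c(z)$ by Kronecker--Weber. The unipotent characters (trivial and Steinberg) are rational-valued, so there is nothing to prove for them.

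When $q$ is odd, the four constituents of $X'_{((q-1)/2)}$ and $Y'_{((q+1)/2)}$ have field of values $\QQ(\sqrt{(-1)^{(q-1)/2}q})$, which is quadratic. Such a character is not rational-valued, so some value $\chi(g)$ is irrational. That value already generates the quadratic field, so $\QQ(\chi)=\QQ(\chi(g))$ and $c(\chi)=c(\chi(g))$.

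For the remaining characters $X'_{(k)}$ with $1\le k\le\lfloor(q-2)/2\rfloor$, I use the reduced value set \eqref{E:X'SL2q}. Every element there has the form $\varepsilon^{ka}+\varepsilon^{-ka}$ with $\varepsilon$ a primitive $(q-1)$-th root of unity. By Lemma~\ref{lem:SL2q} applied with $n=k$, all these values lie in $\QQ(\varepsilon^{k}+\varepsilon^{-k})$. This element is itself a value: it is the $a=1$ member of the reduced value set, i.e.\ the value of $X'_{(k)}$ at the diagonal element $\diag(\varepsilon_0,\varepsilon_0^{-1})$ for a generator $\varepsilon_0$ of $\FF_q^\times$. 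The values discarded in passing to the reduced value set are rational, or rational multiples of retained values, so they add nothing to the field. Hence
\[
\QQ(X'_{(k)})=\QQ\bigl(\varepsilon^{k}+\varepsilon^{-k}\bigr),
\]
and taking conductors gives the claim with $g=\diag(\varepsilon_0,\varepsilon_0^{-1})$.

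The characters $Y'_{(n)}$ with $1\le n\le\lfloor q/2\rfloor$ are handled the same way, using \eqref{E:Y'SL2q} and Lemma~\ref{lem:SL2q} with $\zeta$ a primitive $(q+1)$-th root of unity. The value $\zeta^{n}+\zeta^{-n}$ arises (up to sign) at an element of order $q+1$ in a non-split torus of $\SL_2(q)$.

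The only delicate point is bookkeeping: confirming that the element realizing $\varepsilon^{k}+\varepsilon^{-k}$ (respectively $\zeta^n+\zeta^{-n}$) genuinely lies in $\SL_2(q)$, and that the omitted values (at central and unipotent elements) are rational multiples of retained values or rational. This follows from the standard character table, where central values are $\pm\chi(1)$ and unipotent values are integers or the quadratic Gauss-sum values for the exceptional constituents. I expect no real obstacle beyond this check.
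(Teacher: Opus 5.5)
Your proposal is correct and follows the paper's proof. As there, the unipotent and exceptional constituents are dispatched directly, and $X'_{(k)}$, $Y'_{(n)}$ are handled by applying Lemma~\ref{lem:SL2q} to the reduced value sets \eqref{E:X'SL2q} and \eqref{E:Y'SL2q}, with $\varepsilon^{k}+\varepsilon^{-k}$ (resp.\ $\pm(\zeta^n+\zeta^{-n})$) itself a character value. One harmless slip: when $q$ is an even power of an odd prime, $(-1)^{(q-1)/2}q=q$ is a perfect square, so the exceptional constituents are rational-valued rather than "not rational-valued", and that case is trivial (the paper's word "quadratic" glosses over this too); also, Kronecker--Weber is not needed, since by definition $c(\chi)$ depends only on $\QQ(\chi)$.
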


\begin{proof}
By the discussion in this section, it suffices to prove the result
for the characters $X'_{(k)}$ and $Y'_{(n)}$, whose value sets are
given in \eqref{E:X'SL2q} and \eqref{E:Y'SL2q}. The claim then
follows from Lemma~\ref{lem:SL2q}.
\end{proof}

%%%%%%%%%%%%%%%%%%%%%%%%%%%%%%%%%%%%%%%%%%
%%%%%%%%%%%%%%%%%%%%%%%%%%%%%%%%%%%%%%%%%%%%

\section{The Suzuki groups $\ta B_2(q)$}\label{sec:suzuki}

In this section, we prove Theorem~\ref{theorem:main} for the family
of Suzuki groups \(G = {}^{2}B_2(q)\), where \(q = 2^{2n+1}\) with
\(n \in \mathbb{Z}_{\ge 0}\). The case \(q=2\) corresponds to
\({}^{2}B_2(2)\), which is a Frobenius group of order \(20\). In
this case, all irreducible characters have fields of values
\(\mathbb{Q}\) or \(\mathbb{Q}(i)\), and the result follows
immediately. Thus, we assume \(n \ge 1\).

\subsection{Character values of $\ta B_2(q)$}\label{subsec:Suzuki-characters}
The irreducible characters of \(G\) were determined by Suzuki in
\cite{Suzuki62}. There are four unipotent characters, whose fields
of values are again contained in \(\mathbb{Q}(i)\), as in the case
\(n=0\). It therefore suffices to consider the non-unipotent
characters. These are all semisimple characters, and in particular
they take integer values on unipotent elements. Therefore, for a
reduced value set of such character, we can ignore the values on
unipotent elements. On the other hand, all non-unipotent elements of
\(G\) are semisimple. Consequently, it is sufficient only to examine
the values of semisimple characters on semisimple elements.

There are three conjugacy classes of maximal tori in \(G\). Let
\(T_1\), \(T_2\), and \(T_3\) be representatives of these classes,
of orders \(q-1\), \(q-\sqrt{2q}+1\), and \(q+\sqrt{2q}+1\),
respectively. Note that these tori are all cyclic. Every semisimple
element of \(G\) is conjugate to an element of \(T_1 \cup T_2 \cup
T_3\). The normalizers of these tori in \(G\) are Frobenius groups
of orders \(2|T_1|\), \(4|T_2|\), and \(4|T_3|\), respectively. It
follows that each nonidentity semisimple element of \(G\) is
conjugate to exactly two elements of \(T_1\), or to four elements of
\(T_2\), or to four elements of \(T_3\).

The first family of semisimple characters of \(G\) consists of
\((q-2)/2\) Lusztig induced (Harish--Chandra induced, indeed)
characters \(R_{T_1}^G(\theta)\), where \(\textbf{1}_{T_1}\neq\theta
\in \Irr(T_1)\), each of degree \(q^2+1\). (Note that \(\theta\) is
\(G\)-conjugate to \(\theta^{-1}\) and \(R_{T_1}^G(\theta) =
R_{T_1}^G(\theta^{-1})\).) We label these characters by
\[
X_{(n)} \text{ for } 1 \le n \le (q-2)/2.
\]
They vanish on nontrivial elements of \(T_2\) and \(T_3\). A reduced
value set of \(X_{(n)}\) is given by
\begin{equation}\label{eq:SuX}
\overline{\mathcal{VS}}(X_{(n)})=\{\varepsilon^{na} +
\varepsilon^{-na} : 1 \le a \le q-2\},
\end{equation}
where \(\varepsilon\) is a primitive \((q-1)\)-th root of unity.

The next family of semisimple characters of $G$ consists of
$(q+\sqrt{2q})/4)$ characters $R_{T_2}^G(\theta)$ where
$\textbf{1}_{T_2}\neq\theta\in \Irr(T_2)$, of degree
$(q-1)(q-\sqrt{2q}+1)$. Here each $\theta$ is $G$-conjugate to
$\theta^{-1}$ and $\theta^{\pm q}$. We therefore label them by
\[
Y_{(m)} \text{ for } m\in E_q,
\]
where $E_q \subseteq\{0,1,\dots,(q+\sqrt{2q})\}$ is a set of
representatives for the equivalence relation on $\mathbb{Z}$ defined
by $b \sim b'$ if $b' \equiv b \pmod{q+\sqrt{2q}+1}$ or $b' \equiv
qb \pmod{q+\sqrt{2q}+1}$. These characters vanish on nontrivial
elements of $T_1$ and $T_3$ and a reduced value set of $Y_{(m)}$ is
\begin{equation}\label{eq:SuY}
\overline{\mathcal{VS}}(Y_{(m)})=\{-(\zeta^{mb}+\zeta^{mbq}+\zeta^{-mb}+\zeta^{-mbq}):
1\le b\le q+\sqrt{2q}\},
\end{equation}
where $\zeta$ is a primitive \((q+\sqrt{2q}+1)\)-th root of unity.

The last family of semisimple characters of $G$ consists of
$(q-\sqrt{2q})/4)$ characters $R_{T_3}^G(\theta)$ where
$\textbf{1}_{T_3}\neq\theta\in \Irr(T_3)$, of degree
$(q-1)(q+\sqrt{2q}+1)$. They are labeled by
\[
Z_{(k)} \text{ for } k\in E'_q,
\]
where $E'_q \subseteq\{0,1,\dots,(q-\sqrt{2q})\}$ is a set of
representatives for the equivalence relation on $\mathbb{Z}$ defined
by $c \sim c'$ if $c' \equiv c \pmod{q-\sqrt{2q}+1}$ or $c' \equiv
qc \pmod{q-\sqrt{2q}+1}$. These characters vanish on nontrivial
elements of $T_1$ and $T_2$ and a reduced value value set of
$Z_{(k)}$ is
\begin{equation}\label{eq:SuZ}
\overline{\mathcal{VS}}(Z_{(k)})=\{-(\eta^{kc}+\eta^{kcq}+\eta^{-kc}+\eta^{-kcq}):
1\le c\le q-\sqrt{2q}\},
\end{equation}
where $\eta$ is a primitive \((q-\sqrt{2q}+1)\)-th root of unity.

For the convenience of the reader, we list the possible irrational
values of the characters \(X_{(n)}\), \(Y_{(m)}\), and \(Z_{(k)}\)
in Table~\ref{table-Suzuki}.

\begin{table}[h!]
\caption{Possible irrational values of semisimple characters of $\ta
B_2(q)$.\label{table-Suzuki}}
\begin{tabular}{ccc}
\hline
Characters & Values & Parameters \\

\hline

$\begin{array}{c}
X_{(n)}\\
1\le n\le (q-2)/2
\end{array}$& $\varepsilon^{na}+\varepsilon^{-na}$ & $1\le a\le q-1$ \\

\hdashline

$\begin{array}{c} Y_{(m)}\\
m\in E_q\end{array}$& $-(\zeta^{mb}+\zeta^{mbq}+\zeta^{-mb}+\zeta^{-mbq})$ & $1\le b\le q+\sqrt{2q}$ \\

\hdashline

$\begin{array}{c} Z_{(k)} \\
k\in E'_q\end{array}$& $-(\eta^{kc}+\eta^{kcq}+\eta^{-kc}+\eta^{-kcq})$ & $1\le c\le q-\sqrt{2q}$ \\

\hline
\end{tabular}
\end{table}

%\begin{proposition}
%Theorem~\ref{theorem:main} holds for the characters $X_{(n)}$ ($1
%\le n \le (q-2)/2$) of $\ta B_2(q)$.
%\end{proposition}
%
%\begin{proof}
%    This follows from Lemma~\ref{lem:SL2q}.
%\end{proof}

\subsection{Minimal vanishing sums} Our treatment of the characters $Y_{(m)}$ and $Z_{(k)}$
will involve a case analysis where we use some basic facts about
\emph{minimal vanishing sums} of roots of unity. Given a vanishing
sum of roots of unity, we say that the vanishing sum is
\emph{minimal} if there exists no proper subsum of the sum which
also vanishes. The most basic family of examples of minimal
vanishing sums are sums over the $p$th roots of unity:
$$\sum_{i=0}^{p-1} \zeta_p^i = 0.$$

Given two sums of roots of unity, say $\sum_{i=1}^k \xi_i$ and
$\sum_{j=1}^\ell \xi_j'$, we say that the sums are \emph{equivalent up to
a rotation} if $k = \ell$ and there exists a root of unity
$\varepsilon$ such that (after a suitable reordering of the terms)
$\xi_i = \varepsilon\xi_i'$ for each $i$. Up to rotation, minimal
vanishing sums having at most twelve terms have been classified (see
\cite{PR98}). We, however, shall only require the minimal vanishing
sums having at most seven terms. These are provided in
Table~\ref{T:mvs}.

\begin{table}[h!]
\caption{Minimal vanishing sums up to a rotation having at most 7
terms. Here, $\zeta_n$ denotes a primitive $n$-th root of unity.
\label{T:mvs}}
    \begin{tabular}{c|c}
        Number of terms  & Minimal vanishing sums \\ \hline
        2  &  $1 - 1$ \\ \hdashline
        3  & $1 + \zeta_3 + \zeta_3^2$ \\ \hdashline
        4  & None \\ \hdashline
        5  & $1 + \zeta_5 + \zeta_5^2 + \zeta_5^3 + \zeta_5^4$ \\ \hdashline
        6  & $\zeta_6 + \zeta_6^5 + \zeta_5 + \zeta_5^2 + \zeta_5^3 + \zeta_5^4$ \\ \hdashline
        7  & $1 + \zeta_7 + \zeta_7^2 + \zeta_7^3 + \zeta_7^4 + \zeta_7^5 + \zeta_7^6$ \\ \hdashline
        7  & $\zeta_6 + \zeta_6^5 + \zeta_5\zeta_6 + \zeta_5\zeta_6^5 + \zeta_5^2 + \zeta_5^3 + \zeta_5^4$ \\ \hdashline
        7  & $\zeta_6 + \zeta_6^5 + \zeta_5 + \zeta_5^2\zeta_6 + \zeta_5^2\zeta_6^5 + \zeta_5^3 + \zeta_5^4$ \\ \hline

    \end{tabular}
\end{table}

\begin{lemma}
Let $q = 2^{2n+1}$ for some integer $n$, and let $\zeta$ be a
primitive $(q \pm \sqrt{2q} +1)$-th root of unity, and set
$$
    \alpha_m = \zeta^{m} + \zeta^{mq} + \zeta^{-m} + \zeta^{-mq}
$$
for an integer $m$, and assume that $\alpha_m$ cannot be expressed
as a sum of fewer then four roots of unity. Let $\sigma$ denote the
Galois automorphism of $\QQ_{c(\alpha_m)}$ sending $\zeta^{m}$ to
$\zeta^{mq}$. Then, $\QQ(\alpha_m)$ is precisely the subfield of
$\QQ_{c(\alpha_m)}$ fixed by $\sigma$.
\end{lemma}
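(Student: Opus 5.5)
By Lemma~\ref{L:Lox}, the hypothesis that $\alpha_m$ is not a sum of fewer than four roots of unity gives
\[
\QQ_{c(\alpha_m)}=\QQ(\zeta^m,\zeta^{mq},\zeta^{-m},\zeta^{-mq})=\QQ(\zeta^m).
\]
So $\sigma$ is a well-defined element of $\Gal(\QQ(\zeta^m)/\QQ)$, acting by $\zeta^m\mapsto\zeta^{mq}$.

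Let $N=q\pm\sqrt{2q}+1$. The key arithmetic fact is $q^2\equiv -1\pmod N$, since $(q+\sqrt{2q}+1)(q-\sqrt{2q}+1)=q^2+1$. Hence $\sigma$ has order dividing $4$ and acts by
\[
\zeta^m\mapsto\zeta^{mq}\mapsto\zeta^{-m}\mapsto\zeta^{-mq}\mapsto\zeta^m .
\]
It therefore cyclically permutes the four terms of $\alpha_m$ and fixes $\alpha_m$. This gives the inclusion $\QQ(\alpha_m)\subseteq \QQ(\zeta^m)^{\langle\sigma\rangle}$.

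For the reverse inclusion, Galois theory says it suffices to show that every $\tau\in\Gal(\QQ(\zeta^m)/\QQ(\alpha_m))$ lies in $\langle\sigma\rangle$. Fix such a $\tau$, say $\tau(\zeta^m)=\zeta^{mj}$. The identity $\alpha_m-\tau(\alpha_m)=0$ is a vanishing sum of eight roots of unity:
\[
\zeta^{m}+\zeta^{mq}+\zeta^{-m}+\zeta^{-mq}-\zeta^{mj}-\zeta^{mjq}-\zeta^{-mj}-\zeta^{-mjq}=0 .
\]
Decompose it into minimal vanishing subsums. The aim is to show that each positive term cancels against a negative term, i.e.\ that the multiset $\{\pm m,\pm mq\}$ equals $\{\pm mj,\pm mjq\}$ modulo $N$. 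Then $j\equiv \pm1$ or $\pm q$, so $\tau\in\langle\sigma\rangle$.

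The tool is Table~\ref{T:mvs}, combined with the fact that $N$ is odd. In fact $3\nmid N$, since $q\equiv 2\pmod 3$ and $\sqrt{2q}=2^{n+1}$ gives $N\equiv 2\pm 2^{n+1}+1\not\equiv 0\pmod 3$. All terms are $N$-th roots of unity, possibly negated, so they lie in $\mu_{2N}$.

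\begin{itemize}
\item A minimal vanishing subsum rotated from $1+\zeta_3+\zeta_3^2$ would need ratios of order $3$ between its terms. Such ratios are not available in $\mu_{2N}$ because $3\nmid N$, so no $3$-term subsums occur.
\item The $6$- and $7$-term sums in the table also involve ratios of order $3$ or $6$, which are excluded the same way.
\item Subsums built on $p$-th roots of unity with $p=5,7$ could occur only if $p\mid N$. This needs separate checking: $5\mid N$ happens for $q=8$, where $N=13$ or $5$.
\end{itemize}

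So the main case reduces to $2$-term subsums $\xi-\xi$, which cancel a positive term against a negative term; positive--positive pairs $\xi+(-\xi)$ are impossible since $-1\notin\mu_N$. If there is a $5$-term subsum, the remaining three terms must also vanish, which is impossible. Any decomposition using an odd-length minimal subsum must pair it with another odd-length part, and that is ruled out unless there are two such parts; those configurations are where the real work lies.

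The main obstacle is the cases where $p\mid N$ for an odd prime $p\le 7$, i.e.\ $p=5$ or $7$ (e.g.\ $q=8$, $N=5$). There a $5$-term subsum $\sum\xi\zeta_5^i$ mixes signs from a single coset. The plan for these is to use that all eight terms lie in $\pm\mu_N$, so a $5$-term sum of $N$-th roots of unity with mixed signs cannot vanish minimally unless all its signs agree. That would require five equal-sign terms, but only four terms of each sign exist. For $7$-term subsums the same sign-count argument applies. I expect to make this rigorous by noting that in $\pm\mu_N$ with $N$ odd, a minimal vanishing sum is a rotation of the full sum of $p$-th roots of unity with a common sign, which needs $p\le 4$ terms of one sign. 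That is impossible for $p\ge5$, and $p=3$ is excluded since $3\nmid N$.
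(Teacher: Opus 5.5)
Your route differs from the paper's. The paper quotes Herbig's bound $[\QQ_{c(\beta)}:\QQ(\beta)]\le 6$ for sums of four roots of unity, and concludes because $\Gal(\QQ(\zeta^m)/\QQ(\alpha_m))$ contains $\langle\sigma\rangle$ of order $4$. You instead compute the stabilizer of $\alpha_m$ directly from $\alpha_m-\tau(\alpha_m)=0$. That is a viable and more self-contained strategy, but the decisive step is missing. Table~\ref{T:mvs} only lists minimal vanishing sums with at most $7$ terms, while your sum has $8$, and you never rule out that $\alpha_m-\tau(\alpha_m)$ is itself a minimal vanishing sum. You correctly exclude $3$- and $6$-term parts via $3\nmid N$. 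A $7$-term part would leave one vanishing term, and a $5$-term part would leave a vanishing $3$-term remainder. So the only decompositions left are $2+2+2+2$ and a single $8$-term block. The $8$-term block is therefore the whole remaining difficulty, not the $p=5,7$ cases you flag. Those cases are also misjudged: $q^2\equiv-1\pmod N$ forces every prime divisor of $N$ to be $\equiv 1\pmod 4$, so $7\nmid N$ always. On the other hand $5\mid q^2+1$ always, so $5$ divides one of the two possible values of $N$ for every $q$, not only for $q=8$.

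The principle you propose to close this gap is false as stated. It leaves out the mixed-sign pairs $\xi+(-\xi)$ on which your main case rests. Moreover, in $\pm\mu_{65}$ the sum $\zeta_5+\zeta_5^2+\zeta_5^3+\zeta_5^4-\zeta_{13}-\zeta_{13}^2-\cdots-\zeta_{13}^{12}$ is a minimal vanishing sum with mixed signs. What is true, and enough, is a length-bounded version: if $3\nmid N$ and $7\nmid N$, every minimal vanishing sum of at most $8$ elements of $\mu_{2N}$ is either $\xi+(-\xi)$ or a rotation of $1+\zeta_5+\cdots+\zeta_5^4$. You can get this from Mann's theorem: after rotating, the terms of a minimal vanishing sum with $k$ terms are $M$-th roots of unity, where $M$ is the product of the primes $\le k$. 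Here that puts the terms in $\mu_{2N}\cap\mu_{210}\subseteq\mu_{10}=\pm\mu_5$, where minimal vanishing sums have $2$ or $5$ terms. Alternatively, use the $8$-term entries $(R_5:3R_3)$ and $(R_7:R_3)$ of the classification in \cite{PR98}, each of which has two terms whose ratio has order $3$. With that lemma only $2+2+2+2$ survives. Each pair cancels a positive term against a negative one, since $-1\notin\mu_N$, and your conclusion $\tau\in\langle\sigma\rangle$ follows.
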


\begin{proof}
Notice that by Lemma~\ref{L:Lox}, our assumptions imply that
$\QQ_{c(\alpha_m)} = \QQ(\zeta^{m})$, so the action of $\sigma$ on
the individual terms of $\alpha_m$ is well-defined. Also, observe that $q^2 + 1 =
(q + \sqrt{2q} + 1)(q - \sqrt{2q} + 1)$, so $\zeta^{mq^2} =
\zeta^{-m}$. In particular, the terms of $\alpha_m$ are indeed
permuted by $\sigma$.

First, assume that the terms of $\alpha_m$ are each distinct. The
first author has shown in his dissertation \cite[Theorem
3.8]{Herb26} that for any sum of four roots of unity $\beta$, the
degree $[\QQ_{c(\beta)}:\QQ(\beta)]$ is at most 6.
Thus, as
$\sigma$ permutes the terms of $\alpha_m$ transitively and has order 4,
it follows that $\Gal(\QQ_{c(\alpha_m)}/\QQ(\alpha_m))
= \langle \sigma \rangle$, and the lemma holds in this case by the
Galois correspondence.

If the terms of $\alpha_m$ are not distinct, this implies that
either $\zeta^{mq} = \zeta^{m}$ or that $\zeta^{mq} = \zeta^{-m}$.
In the former case, $\alpha_m = 4\zeta^m$, and the lemma holds
trivially in this case. In the latter case, this implies that
$\alpha_m = 2(\zeta^m + \zeta^{-m})$ and that $\sigma$ is the
automorphism of complex conjugation, but the lemma follows in this
case from Lemma~\ref{L:diss-lem}.
\end{proof}

The following key result addresses the characters \(Y_{(m)}\) and
\(Z_{(k)}\).

\begin{proposition}\label{prop:Su-vanishingsum}
Let $m$ be an integer, and let $q = 2^{2n + 1}$ for some integer $n
\geq 1$. Let $\zeta$ be a primitive $(q \pm \sqrt{2q} + 1)$-th root
of unity. Let $$S:=\{\zeta^{mb} + \zeta^{mqb} + \zeta^{-mb} +
\zeta^{-mqb} : b \in \ZZ\}.$$ Then, \[\QQ(S) = \QQ(\zeta^{m} +
\zeta^{mq} + \zeta^{-m} + \zeta^{-mq}).\]
\end{proposition}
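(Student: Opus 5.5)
Write $N = q\pm\sqrt{2q}+1$, $\omega:=\zeta^m$ and $\alpha_b := \omega^b+\omega^{bq}+\omega^{-b}+\omega^{-bq}$, so that $S=\{\alpha_b : b\in\ZZ\}$ and $\alpha_1=\alpha_m$ in the notation of the preceding lemma. The inclusion $\QQ(\alpha_1)\subseteq \QQ(S)$ is trivial, so the task is to show $\alpha_b\in\QQ(\alpha_1)$ for every $b$. Since $N$ is odd, $\omega$ has odd order $d\mid N$, and we may work inside $\QQ(\omega)=\QQ_d$. I will split into cases according to whether $\alpha_1$ is a sum of four roots of unity in a minimal way.

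\emph{Main case.} Suppose $\alpha_1$ cannot be written as a sum of fewer than four roots of unity. By the preceding lemma, $\QQ_{c(\alpha_1)}=\QQ(\omega)$ and $\QQ(\alpha_1)$ is the fixed field of $\sigma:\omega\mapsto\omega^q$. Since $q^2\equiv -1 \pmod N$, we get $\sigma(\omega^b)=\omega^{bq}$ for all $b$, and $\sigma$ permutes the four terms $\omega^b,\omega^{bq},\omega^{-b},\omega^{-bq}$ cyclically. Hence $\sigma(\alpha_b)=\alpha_b$, so $\alpha_b\in\QQ(\alpha_1)$ for all $b$, which settles this case.

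\emph{Degenerate cases.} Now suppose $\alpha_1$ can be written as a sum of at most three roots of unity. Then $\omega+\omega^q+\omega^{-1}+\omega^{-q}-\sum_{j\le 3}\xi_j$ is a vanishing sum of at most seven roots of unity. I decompose it into minimal vanishing subsums and compare each with Table~\ref{T:mvs}, using that the $\omega$-terms have odd order. Terms coinciding among themselves give $\omega^q=\omega$ or $\omega^q=\omega^{-1}$.
\begin{itemize}
\item If $\omega^{q-1}=1$, then $\omega^{q^2-1}=\omega^{-2}=1$, so $\omega=1$ (odd order) and $S\subseteq\{4\}$.
\item If $\omega^{q+1}=1$, then since $\gcd(q+1,N)$ divides $\gcd(q+1,q^2+1)=1$, again $\omega=1$.
\end{itemize}
So the four terms are distinct. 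The remaining cancellations then force $\omega$ to have order dividing $3$, $5$ or $7$ (or something built from $\zeta_3,\zeta_5,\zeta_6$ as in the table). Since $N$ is odd, order $3$ with $\omega\neq1$ would again lead to $\omega^{q}\in\{\omega,\omega^{-1}\}$, which we have excluded. I expect the survivors to be $d=5$ (indeed $5\mid q^2+1$) and possibly $d=7$. For each survivor I compute $S$ directly. For example, if $d=5$, then $\{\pm1,\pm q\}$ covers all units mod $5$, so $\alpha_b=-1$ whenever $5\nmid b$ and $\alpha_b=4$ otherwise. Thus $S\subseteq\QQ$ and the claim holds.

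The main obstacle is this degenerate bookkeeping: I must run through the ways a four-term sum of distinct odd-order roots can cancel against at most three roots of unity, and show that every surviving configuration either has a rational value set or can be handled by an explicit computation of $S$. I would organize the argument by the size of the minimal vanishing subsum containing $\omega$, and use $q\equiv \pm 2,\pm3 \pmod{\text{small primes}}$ together with $q^2\equiv-1\pmod d$ to eliminate the cases. Note that $d=7$ is impossible, since $-1$ is not a square mod $7$; by the same reasoning only primes $\equiv1\pmod 4$ can divide $d$. This makes $d=5$ essentially the only exceptional order.
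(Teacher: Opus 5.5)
Your main case is the paper's argument: the preceding lemma identifies $\QQ(\alpha_1)$ with the fixed field of $\sigma\colon\omega\mapsto\omega^q$, and $\sigma$ cyclically permutes the terms of every $\alpha_b$. The gap is in the degenerate case. Its whole content is the sentence ``the remaining cancellations then force $\omega$ to have order dividing $3$, $5$ or $7$'', which you assert but do not prove; you say yourself that you ``expect'' the survivors and that the bookkeeping remains to be done. Moreover, Table~\ref{T:mvs} constrains only ratios of terms lying in a \emph{common} minimal vanishing subsum, not $\omega$ itself. Nothing you wrote prevents the four $\omega$-terms from lying in different minimal subsums. For example, each could cancel against its own $-\xi_j$, or sit in a three-term subsum with two of the $\xi_j$. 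Until such configurations are excluded, no restriction on $d$ follows.

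The missing step is a short count, after which your own arithmetic observations finish the job.
\begin{itemize}
\item In the degenerate case $\omega\neq1$, since otherwise $\alpha_1=4$, which is not a sum of at most three roots of unity. So the four $\omega$-terms are distinct.
\item Split $\omega+\omega^q+\omega^{-1}+\omega^{-q}-\xi_1-\cdots-\xi_k=0$ ($k\le3$) into minimal vanishing subsums. Each has at least two terms, so a subsum containing only one $\omega$-term must contain some $-\xi_j$. With four $\omega$-terms but at most three $\xi_j$, some subsum contains two $\omega$-terms.
\item Their ratio $\rho$ is a ratio of two entries of one row of Table~\ref{T:mvs}, so $\rho^{30}=1$ or $\rho^{7}=1$.
\item On the other hand, $\rho\in\{\omega^{\pm2},\omega^{\pm2q},\omega^{\pm(q-1)},\omega^{\pm(q+1)}\}$. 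These exponents are prime to $d$, because $d$ is odd and $\gcd(q\pm1,q^2+1)=1$. Hence $\rho$ has order exactly $d$, so $d\mid 30$ or $d\mid 7$.
\item Since $d>1$ is odd and every prime divisor of $d$ is $\equiv1\pmod 4$, we get $d=5$. Your computation for $d=5$ then gives $S\subseteq\QQ$.
\end{itemize}

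Once this is supplied, your route is noticeably shorter than the paper's. The paper walks through the partitions $6$, $3+3$, $2+2+2$, $7$, $5+2$, $3+2+2$ one by one. It even treats $\zeta_3$- and $\zeta_7$-configurations that your congruence $q^2\equiv-1\pmod d$ rules out from the start.
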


\begin{proof}
Fix an integer $m$ and let $\zeta$ be a primitive $(q \pm \sqrt{2q}
+ 1)$-th root of unity where $q = 2^{2n + 1}$. In particular,
$\zeta$ is a root having odd order. Now, set $\alpha_m = \zeta^{m} +
\zeta^{mq} + \zeta^{-m} + \zeta^{-mq}$. If $\alpha_m$ cannot be
expressed as a sum of less than four roots of unity, then by the
result of Loxton in Lemma~\ref{L:Lox}, we have that
$\QQ_{c(\alpha_m)} = \QQ(\zeta^m, \zeta^{mq}, \zeta^{-m},
\zeta^{-mq}) = \QQ(\zeta^m)$. Let $\sigma$ denote the Galois
automorphism of $\QQ(\zeta^m)$ sending $\zeta^{m}$ to $\zeta^{mq}$.
Since all values of $S$ lie in $\QQ_{c(\alpha_m)} = \QQ(\zeta^m)$,
the proposition follows in this case from the above lemma since all
elements of $S$ are fixed by $\sigma$. For the remainder of the
proof, we shall assume that $\alpha_m$ is expressible as a sum of
less than four roots of unity.

\medskip

\emph{Case 1: $\alpha_m = 0$.} Assume that $\alpha_m = 0$ so that
$\alpha_m$ is a vanishing sum of four roots of unity. We claim that
this case is impossible. By Table~\ref{T:mvs}, there are no minimal
vanishing sums having four terms, so it is therefore possible to
partition the four terms of $\alpha$ into two minimal vanishing sums
each having two terms. This implies for instance that $-\zeta^{m}$
is equal to either $\zeta^{-m}$, $\zeta^{mq}$, or $\zeta^{-mq}$.
However, this would force at least one of the terms to have even
order, an impossibility.

\medskip

\emph{Case 2: $\alpha_m$ equals a root of unity.} Now, assume that
$\alpha_m$ is equal to a single root of unity. However, $\alpha_{m}$
is clearly real, so we obtain that $\alpha_m = \pm 1$. We therefore
obtain the following vanishing sum having five terms:
    \begin{equation} \label{E:one-rou}
        \zeta^m + \zeta^{-m} + \zeta^{mq} + \zeta^{-mq} \pm 1 = 0.
    \end{equation}
If the above vanishing sum is minimal, then by Table~\ref{T:mvs},
the four terms of $\alpha_m$ are either each primitive fifth roots
of unity or primitive tenth roots of unity. However, none of the
terms can have order 10 since this contradicts the fact that $\zeta$
has odd order. Now, it follows that for each $b \in \ZZ$,
    $$
\zeta^{mb} + \zeta^{-mb} + \zeta^{mqb} + \zeta^{-mqb} = \zeta_5^b
+\zeta_5^{2b} + \zeta_5^{3b} + \zeta_5^{4b} \in \{-1,5\}.
    $$
In particular, it follows that $\QQ(S) = \QQ$ in this
case, and the proposition holds trivially here.

On the other hand,
if the terms of \eqref{E:one-rou} can be partitioned into two
minimal vanishing sums, then one of these sums will have three terms
and the other will have two terms. In particular, one of the terms
of \eqref{E:one-rou}, say $\epsilon$, is equal to the negative of
some other term of the sum. However, since the terms of $\alpha_m$
are each odd, one of these two terms must be equal to $\pm 1$ and
the other, equal to $\mp 1$. Thus, at least one term of $\alpha_m$
is equal to $\pm 1$. However, all terms of $\alpha_m$ have equal
order, implying that $\alpha_m = \pm 4$, an impossibility.

\medskip

\emph{Case 3: $\alpha_m$ is expressible as a sum of two roots of
unity.} Now, assume that $\alpha_m$ can be rewritten as a sum of two
roots of unity, say, $-(\xi_1 + \xi_2)$. We obtain the below
vanishing sum having six terms:
    \begin{equation}\label{E:two-rou}
        \zeta^m + \zeta^{mq} + \zeta^{-m} + \zeta^{-mq} + \xi_1 + \xi_2 = 0.
    \end{equation}
We proceed in cases depending upon the ways the above sum can be
partitioned into minimal vanishing sums. As there are no minimal
vanishing sums having one or four terms, the possibilities
correspond to integer partitions of 6 not including either 1 or 4:
$6$, $3 + 3$, and $2 + 2 + 2$

If the vanishing sum is minimal, then by Table~\ref{T:mvs}, there
exists a root of unity $\epsilon$ such that $$\{\zeta^m, \zeta^{mq},
\zeta^{-m}, \zeta^{-mq},\xi_1,\xi_2\} = \{\epsilon\zeta_6,
\epsilon\zeta_6^5,\epsilon\zeta_5,\epsilon\zeta_5^2,\epsilon\zeta_5^3,\epsilon\zeta_5^4\}.$$
As previously mentioned, each of the terms of $\alpha_m$ have odd
order, so none of the terms can be equal to $\epsilon\zeta_6$ or
$\epsilon\zeta_6^5$. Thus, it follows that
$$
    \alpha_m = -(\xi_1 + \xi_2) = \epsilon\zeta_6 + \epsilon\zeta_6^5 = \epsilon,
$$
but this was
exactly the case considered in the previous paragraph.

On the other hand, \eqref{E:two-rou} may be partitioned into two minimal
vanishing sums each having three terms. Notice that if one of these
two vanishing sums is a subsum of $\alpha_m$, then $\alpha_m$ is
equal to a root of unity, a case we have already handled. Thus, we
may assume that $\alpha_m$ consists of two terms from both of the
minimal vanishing sums. We conclude that
    $$
    \{\zeta^m, \zeta^{mq}, \zeta^{-m}, \zeta^{-mq}\} = \{\xi_1\zeta_3, \xi_1\zeta_3^2, \xi_2\zeta_3, \xi\zeta_3^2\}.
    $$
Thus, the values of $S$ range through the set
    $$
\{\zeta^{mb} + \zeta^{mqb} + \zeta^{-mb} + \zeta^{-mqb} : b \in
\ZZ\} = \{\xi_1^b\zeta_3^b + \xi_1^b\zeta_3^{2b} +
\xi_2^b\zeta_3^{b} + \xi^b\zeta_3^{2b} : b \in \ZZ\}
    $$ $$
    = \{ -\xi_1^b - \xi_2^b : b \in \ZZ,\, 3 \nmid b\} \cup \{ 2\xi_1^b + 2\xi_2^b : b \in \ZZ,\, 3 \nmid
    b\}.
    $$
Now, notice that the elements of $S$ are all real, including
$-\xi_1 - \xi_2$, from which it follows that $\xi_1^{-1} = \xi_2$.
It follows that that $\QQ(S)$ is the field generated by the
set $\{\xi_1^b + \xi_1^{-b} : b \in \ZZ\}$, and this field is
$\QQ(\xi_1 + \xi_1^{-1})$ by Lemma~\ref{lem:SL2q}. The
proposition holds in this case.

If \eqref{E:two-rou} may be partitioned into three minimal vanishing
sums each having two terms, then this would force at least three of
the six terms to have even order. In particular, at least one of the
four terms of $\alpha_m$ would have even order in this case, an
impossibility.

\medskip

\emph{Case 4: $\alpha_m$ may be expressed as a sum of three roots of
unity.} Assume that $\alpha_m$ may be expressed as a sum of three
roots of unity, say $\alpha_m = -\xi_1 - \xi_2 - \xi_3$. We obtain
the minimal vanishing sum having seven terms below:
    \begin{equation}\label{E:three-rous}
        \zeta^m + \zeta^{mq} + \zeta^{-m} + \zeta^{-mq} + \xi_1 + \xi_2 + \xi_3 =
        0.
    \end{equation}
We again proceed in cases depending upon the ways the above sum can
be partitioned into minimal vanishing sums. Here the possible
partitions are $7$, $5 + 2$, and $3 + 2 + 2$.

We first consider the case where \eqref{E:three-rous} is a minimal
vanishing sum. Checking Table~\ref{T:mvs}, we see that there exists
a root of unity $\epsilon$ such that either
    \begin{equation}\label{E:mvs7:1}
\{\zeta^m, \zeta^{mq}, \zeta^{-m}, \zeta^{-mq}, \xi_1, \xi_2, \xi_3
\} = \{\epsilon\zeta_7^i : i=0,1,\ldots,6\},
    \end{equation}
    \begin{equation}\label{E:mvs7:2}
\{\zeta^m, \zeta^{mq}, \zeta^{-m}, \zeta^{-mq}, \xi_1, \xi_2, \xi_3
\} = \{\epsilon\zeta_6, \epsilon\zeta_6^5,
\epsilon\zeta_5\zeta_6,\epsilon\zeta_5\zeta_6^5,\epsilon\zeta_5^2,\epsilon\zeta_5^3,\epsilon\zeta_5^4\},
    \end{equation}
    or
    \begin{equation}\label{E:mvs7:3}
\{\zeta^m, \zeta^{mq}, \zeta^{-m}, \zeta^{-mq}, \xi_1, \xi_2, \xi_3
\} = \{\epsilon\zeta_6, \epsilon\zeta_6^5,
\epsilon\zeta_5,\epsilon\zeta_5^2\zeta_6,\epsilon\zeta_5^2\zeta_6^5,\epsilon\zeta_5^3,\epsilon\zeta_5^4\}
    \end{equation}
Since handling case \eqref{E:mvs7:3} is nearly identical to handling \eqref{E:mvs7:2}, we omit the handling of this case case.

First, we handle \eqref{E:mvs7:1} first. Notice that the rightmost set is fixed under
multiplication by a seventh root of unity, so we may assume without
loss that $\epsilon$ is a term belonging to $\alpha_m$. Now, notice
that $\epsilon^{-1}$ is also a term of $\alpha_m$, and thus
$\epsilon = \epsilon^{-1}\zeta_7^i$ for some $i$. In particular,
$\epsilon^{2} = \zeta_7^i$. In particular, it follows that
$\epsilon$ is a fourteenth root of unity. In fact $\epsilon$ is a
seventh root of unity, having odd order. We conclude that the values
of $S$ all lie in the unique real field lying between $\QQ$ and $\QQ(\zeta_7)$, and the proposition holds in
this case. In the case of \eqref{E:mvs7:2}, notice that $\alpha_m$
cannot have a term of the form $\epsilon\zeta_5^i$ for $i=2,3,4$,
since $\alpha_m$ would necessarily have a term of the form
$\epsilon\zeta_5^j\zeta_6^\ell$ where $j \in \{0,1\}$ and $\ell \in
\{1,5\}$, and this would force one of the terms of $\alpha_m$ to
have even order. However, this would imply that
    $$
\zeta^m +  \zeta^{mq} + \zeta^{-m} + \zeta^{-mq} = \epsilon\zeta_6 +
\epsilon\zeta_6^5 + \epsilon\zeta_5\zeta_6 +
\epsilon\zeta_5\zeta_6^5 = \epsilon + \epsilon\zeta_5.
    $$
In particular, $\alpha_m$ is expressible as a sum of two roots of
unity, a case we have already handled.

Now, we handle the case where \eqref{E:three-rous} is partitioned
into two minimal vanishing sums with one sum having two terms and
the other having five terms. That is, there exists $\epsilon_1$ and
$\epsilon_2$ such that
    $$
\{\zeta^m, \zeta^{mq}, \zeta^{-m}, \zeta^{-mq}, \xi_1, \xi_2, \xi_3
\} =
\{\epsilon_1,\epsilon_1\zeta_5,\epsilon_1\zeta_5^2,\epsilon_1\zeta_5^3,\epsilon_1\zeta_5^4,\epsilon_2,-\epsilon_2\}.
    $$
First, notice that one of either $\epsilon_2$ or $-\epsilon_2$ is a
term of $\alpha_m$, or otherwise, $\alpha_m$ would be expressible as
either a root of unity or a sum of two roots of unity. It is of no
loss to assume that $\epsilon_2$ is a term of $\alpha_m$. We
conclude that
    $$
\{\zeta^{m},\zeta^{mq},\zeta^{-m},\zeta^{-mq}\} =
\{\epsilon_1\zeta_5^c, \epsilon_1\zeta_5^d, \epsilon_1\zeta_5^e,
\epsilon_2 \}
    $$
for distinct $c,d,e \in \{0,1,2,3,4\}$. Now, as the left set above
is closed under conjugation, there is $c,d \in \{0,1,2,3,4\}$ such
that $(\epsilon_1\zeta_5^{c})^{-1} = \epsilon_1\zeta_5^d$. In
particular, we have that $\epsilon_1^2$ is a fifth root of unity. In
particular, since the terms of $\alpha_m$ are odd, $\epsilon_1$ is a
fifth root of unity. Also, as all terms of $\alpha_m$ have the same
order, it follows that all the terms of $\alpha_m$ are fifth roots
of unity as well, and we conclude that the values $S$ lie in the
real subfield lying between $\QQ$ and $\QQ(\zeta_5)$, and the
proposition holds in this case.

Now, we consider \eqref{E:three-rous} may be partitioned into three
minimal vanishing sums with one sum having three terms and the other
two sums both having two terms. There then exists three roots of
unity, $\epsilon_1, \epsilon_2,$ and $\epsilon_3$ such that
    $$
\{\zeta^m, \zeta^{mq}, \zeta^{-m}, \zeta^{-mq}, \xi_1, \xi_2, \xi_3
\} = \{\epsilon_1, \epsilon_1\zeta_3, \epsilon_1\zeta_3^2,
\epsilon_2, -\epsilon_2, \epsilon_3, -\epsilon_3\}.
    $$
One sees that $\alpha_m$ must have exactly one of either
$\epsilon_2$ or $-\epsilon_2$ as a term, or otherwise, $\alpha_m$ is
expressible as a sum of two or less roots of unity. We draw a
similar conclusion regarding $\epsilon_3$. In particular, we may
assume that $\epsilon_2$ and $\epsilon_3$ are terms of $\alpha_m$.
Similarly, we may assume without loss that $\epsilon_1\zeta_3$ and
$\epsilon_1\zeta_3^2$ are terms of $\alpha_m$. We have
    $$
\{\zeta^{m},\zeta^{mq},\zeta^{-m},\zeta^{-mq}\} =
\{\epsilon_1\zeta_3, \epsilon_1\zeta_3^2, \epsilon_2, \epsilon_3 \}
    $$
Now, as the left set is closed under conjugation, we see that
$(\epsilon_1\zeta_3)^{-1}$ is also a term of $\alpha_m$. If
$(\epsilon_1\zeta_3)^{-1} = \epsilon_1\zeta_3^2$, this implies that
$\epsilon_1 = \pm 1$. As all terms of $\alpha_m$ have the equal, odd
order, we conclude that the terms of $\alpha_m$ are each primitive
third roots of unity, and it follows in this case that all values of
$S$ are rational. On the other hand, it may happen that
$(\epsilon_1\zeta_3)^{-1} = \epsilon_2$ or $\epsilon_3$. It is of no
loss to assume $(\epsilon_1\zeta_3)^{-1} = \epsilon_2$. This also
implies that $(\epsilon_1\zeta_3^2)^{-1} = \epsilon_3$. One sees
that these two relations imply that $\epsilon_3 =
\epsilon_2\zeta_3^{2}$. In particular, we obtain
$$
\alpha_m = \epsilon_1\zeta_3 + \epsilon_1\zeta_3^2 + \epsilon_2 + \epsilon_2\zeta_3^j = -\epsilon_1 - \epsilon_2\zeta_3.
$$
Thus, $\alpha_m$ is a sum of two roots of unity in this case, a case which we have already handled above.

This concludes the proof.
\end{proof}

%%%%%%%%%%%%%%%%%%%%%%%%%%%%%%%%%%%
%%%%%%%%%%%%%%%%%%%%%%%%%%%%%%%%%%%

\subsection{Proof of Theorem~\ref{theorem:main} for $\ta B_2(q)$}

\begin{proposition}\label{prop:main-Suzuki}
Theorem~\ref{theorem:main} holds for all irreducible characters of
$\ta B_2(q)$.
\end{proposition}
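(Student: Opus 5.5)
The plan is to reduce to the three families of semisimple characters and then apply the two field-generation results already proved. By the discussion in Subsection~\ref{subsec:Suzuki-characters}, the case $q=2$ and the four unipotent characters need no work: their fields of values lie in $\QQ(i)$, which is generated by a single value, or they are rational. In either case some single value already attains the conductor. Semisimple characters take integer values on unipotent elements, and every non-unipotent element is semisimple. So it suffices to exhibit, for each of $X_{(n)}$, $Y_{(m)}$, $Z_{(k)}$, one value $z$ in the reduced value sets \eqref{eq:SuX}, \eqref{eq:SuY}, \eqref{eq:SuZ} with $\QQ(\chi)=\QQ(z)$. This gives $c(\chi)=c(z)$, because the conductor of a set of cyclotomic integers equals the conductor of the abelian field it generates.

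For $X_{(n)}$, the reduced value set is $\{\varepsilon^{na}+\varepsilon^{-na}\}$ with $\varepsilon$ a primitive $(q-1)$-th root of unity. Lemma~\ref{lem:SL2q} applies directly and gives $\QQ(X_{(n)})=\QQ(\varepsilon^n+\varepsilon^{-n})$. The element $\varepsilon^n+\varepsilon^{-n}$ is the value at a generator of $T_1$ (the case $a=1$), so it is attained.

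For $Y_{(m)}$, the values are $-\alpha_{mb}$ with $\alpha_{mb}=\zeta^{mb}+\zeta^{mbq}+\zeta^{-mb}+\zeta^{-mbq}$, where $\zeta$ is a primitive $(q+\sqrt{2q}+1)$-th root of unity. As $b$ ranges over $1\le b\le q+\sqrt{2q}$, and $b=0$ gives the rational value $-4$, we obtain exactly $-S$ for the set $S$ of Proposition~\ref{prop:Su-vanishingsum}, up to rational values. That proposition gives $\QQ(Y_{(m)})=\QQ(S)=\QQ(\alpha_m)$, and $-\alpha_m$ is the value at a generator of $T_3$, or of the torus of order $q+\sqrt{2q}+1$ under the paper's labeling. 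The family $Z_{(k)}$ is handled identically, with $\eta$ a primitive $(q-\sqrt{2q}+1)$-th root of unity, using the other sign choice in Proposition~\ref{prop:Su-vanishingsum}.

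The only point needing care is bookkeeping rather than mathematics. I must check that the index ranges in \eqref{eq:SuY} and \eqref{eq:SuZ} cover all residues $b$ modulo the torus order, apart from $b\equiv 0$, which contributes only rationals, so that $\QQ(\chi)$ really equals $\QQ(S)$. I must also check that the omitted values (the degree, the values on unipotent elements, and the values on the other tori, which are $0$ or integers) are rational. With these confirmed, the proposition follows. As a byproduct, $\QQ(\chi)$ is generated by a single value for every irreducible character of $\ta B_2(q)$.
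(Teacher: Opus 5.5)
Your proposal is correct and follows essentially the same route as the paper's proof. You reduce, via the discussion of the Suzuki character values, to the families $X_{(n)}$, $Y_{(m)}$, $Z_{(k)}$; apply Lemma~\ref{lem:SL2q} to the first; and apply Proposition~\ref{prop:Su-vanishingsum}, with its two sign choices, to the other two. Your extra bookkeeping only makes explicit what the paper leaves implicit: the index ranges cover all nonzero residues, the generating value is attained at $a=1$ or $b=1$, and $\QQ(\chi)=\QQ(z)$ forces $c(\chi)=c(z)$.
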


\begin{proof}
By the discussion in Subsection~\ref{subsec:Suzuki-characters}, it
remains to consider the characters \(X_{(n)}\), \(Y_{(m)}\), and
\(Z_{(k)}\) with reduced value sets given in \eqref{eq:SuX},
\eqref{eq:SuY}, and \eqref{eq:SuZ}, respectively. The result for
\(X_{(n)}\) follows from Lemma~\ref{lem:SL2q}, while the results for
\(Y_{(m)}\) and \(Z_{(k)}\) follow from
Proposition~\ref{prop:Su-vanishingsum}.
\end{proof}

Theorem~\ref{theorem:main} is now completely proved, by
Propositions~\ref{prop:GL2q}, \ref{prop:main-SL2q}, and
\ref{prop:main-Suzuki}.

\subsection{Concluding remarks}
It is plausible that our techniques could be extended to establish
(\ref{star}) for other low-rank groups whose generic character
tables are known, such as the Ree groups $\ta G_2(q)$
(\cite{Ward62,Brunat07}), as well as the linear and unitary groups
$\GL_3(q)$, $\GU_3(q)$, $\SL_3(q)$, and $\SU_3(q)$
(\cite{Steinberg51,Ennola63,SimpsonFrame73}). However, carrying this
out would be considerably more involved and would substantially
increase the length of the paper.

It is worth noting that we were unable to find irreducible
characters $\chi$ of any group $G$ such that
\[
c(\chi) > \lcm(c(\chi(x)), c(\chi(y)) \quad \text{for all } x, y \in
G.
\]

In \cite[Question~7.3]{Navarro23}, G.~Navarro asked whether
\(\QQ(\chi)\) is generated by a single value whenever \(\chi\) has
odd degree or is \(2\)-rational; as far as we know, this remains
open.

Finally, for further discussion of the significance and applications
of the character conductor in other problems in group representation
theory, we refer the reader to the recent survey~\cite{Hung25}.

%%%%%%%%%%%%%%%%%%%%%%
%%%%%%%%%%%%%%%%%%%%%%%
%%%%%%%%%%%%%%%%%%%%%%
%%%%%%%%%%%%%%%%%%%%%%%

\end{document}